\newcommand{\s}[1]{\mbox{\sf #1}}
\newcommand{\Q}{\mathbb Q}
\newcommand{\Z}{\mathbb Z}
\newcommand{\Gal}{\text{Gal}}
\DeclareSymbolFont{cyrletters}{OT2}{wncyr}{m}{n}
\DeclareMathSymbol{\Sha}{\mathalpha}{cyrletters}{"58}
\begin{document}

\markboth{Cheol-Min Park, Sun Woo Park}{Minimal degrees of algebraic numbers with respect to primitive elements}

%%%%%%%%%%%%%%%%%%%%% Publisher's Area please ignore %%%%%%%%%%%%%%%
%
\catchline{}{}{}{}{}
%
%%%%%%%%%%%%%%%%%%%%%%%%%%%%%%%%%%%%%%%%%%%%%%%%%%%%%%%%%%%%%%%%%%%%

\title{Minimal degrees of algebraic numbers with respect to primitive elements}

\author{Cheol-Min Park}

\address{Division of Advanced Researches for Industrial Mathematics, National Institute for Mathematical Sciences\\
70, Yuseong-daero 1689 beon-gil, Yuseong-gu, Daejeon, Republic of Korea\\
\email{mpcm@nims.re.kr} }

\author{Sun Woo Park}

\address{Division of Advanced Researches for Industrial Mathematics, National Institute for Mathematical Sciences\\
70, Yuseong-daero 1689 beon-gil, Yuseong-gu, Daejeon, Republic of Korea\\
\email{spark483@nims.re.kr} }

\maketitle

\begin{history}
\received{(Day Month Year)}
\accepted{(Day Month Year)}
\end{history}

\begin{abstract}
Given a number field $L$, we define the degree of an algebraic number $v \in L$ with respect to a choice of a primitive element of $L$. We propose the question of computing the minimal degrees of algebraic numbers in $L$, and examine these values in degree $4$ Galois extensions over $\Q$ and triquadratic number fields. We show that computing minimal degrees of non-rational elements in triquadratic number fields is closely related to solving classical Diophantine problems such as congruent number problem as well as understanding various arithmetic properties of elliptic curves.
\end{abstract}

\keywords{minimal degrees; algebraic numbers; triquadratic number fields; elliptic curves.}

\ccode{Mathematics Subject Classification 2010: 11R04, 11R09, 14G05, 14H52}

\section{Introduction}
Let $L$ be an algebraic number field of degree $n$. Then there exists a primitive element $\alpha \in L$ such that $\Q(\alpha) = L$ and the elements $\{1, \alpha, \cdots, \alpha^{n-1}\}$ generate $L$ as a $\Q$-vector space. Every element $v \in L - \Q$ can be uniquely written in the form
\begin{equation*}\label{std_rep}
v =c_{m-1}\alpha^{m-1}+c_{m-2}\alpha^{m-2}+\cdots+c_{1}\alpha+c_0
\end{equation*}
with $c_0,c_1,...,c_{m-1}\in\Q$ for some $m\le n$.
We encode $v$ by the polynomial $f(x) = \sum_{k=0}^{m-1} c_k x^k$ of degree $n-1$, which is called
the standard representation of $v$ with respect to the primitive element $\alpha$. The encoding of $v$ depends on the choice of a primitive element in $L$. 

It is a natural question to ask what the minimal degree of encoding polynomials of $v\in L$ is. More precisely, the minimal degree is defined as follows:

\begin{definition}[Degrees of algebraic numbers]
Let $L$ be an algebraic number field of degree $n$, and let $\{1, \alpha, \cdots, \alpha^{n-1}\}$ be a $\Q$-basis of $L$ for some primitive element $\alpha \in L$.  Given any element $v\in L$, we can write $v$  uniquely as 
\begin{equation*}
    v = f(\alpha)
\end{equation*}
for some $f\in\mathbb{Q}[x]$ with $\deg f \leq n-1$. Then the degree of $v$ with respect to $\alpha$, written as $\deg_\alpha(v)$, is the degree of $f(x)$.
\end{definition}

\begin{definition}[Minimal degrees of algebraic numbers]
Given any  element $v$ in a number field $L$, the minimal degree of $v$ is the minimum of the degrees of $v$ with respect to all primitive elements $\alpha$ of $L$ and written as
\begin{equation*}
    \min \deg_L (v) := \min_{\alpha \in \mathcal{A}} \deg_\alpha (v)
\end{equation*}
where $\mathcal{A}$ is the set of all primitive elements in $L$.
\end{definition}

Our initial motivation for computing the minimal degrees of algebraic numbers came from constructing a family of pairing-friendly curves with small $\rho$ values \cite{BW,FST,KSS}. On the other hand, the computation also raises other interesting problems such as finding short representations of algebraic numbers over $\Q$. 

In this paper, we compute the minimal degrees of algebraic numbers in degree $4$ Galois extensions over $\Q$ and triquadratic number fields. As far as we are aware, there are no published results on this problem. 

In order to compute the minimal degree, we first show in Section 2 that a lower bound of the minimal degree of $v$ is given by the degree of the field extension of $L$ over $\Q(v)$. Section 3 and 4 then shows that in degree $4$ Galois extension fields and  index-4 subfields of triquadratic number fields, we can compute the minimal degrees of algebraic numbers by finding primitive elements which provide lower bounds of minimal degrees. In Section 5,  we prove that computing the minimal degrees of some elements in index-2 subfields of triquadratic number fields is equivalent to showing the existence of non $2$-torsion rational points of an associated families of elliptic curves. Afterwards, we discuss how classical arithmetic problems such as congruent number problems are related to computations of minimal degrees. We also make asymptotic statements on the probabilistic distribution of minimal degrees over certain families of triquadratic number fields.

This paper is structured as follows. We compute the lower bounds of minimal degrees in Section 2. We compute the minimal degrees of algebraic numbers in $L$ or $M$ where $L$ are Galois extensions of degree 4 over $\Q$  in Section 3
and $M$ are index-4  subfields of triquadratic number fields in Section 4. In the final section, we compute the minimal degrees of algebraic numbers in
index-2  subfields of triquadratic number fields.

\section*{Acknowledgements}
This work was supported by National Institute for Mathematical Sciences (NIMS) grant funded by the Korean government (MSIT) (B20810000). We thank the reviewers for giving helpful and constructive comments and advice.

\section{Lower bounds of minimal degrees of algebraic numbers}

We start with the following proposition, which shows that a lower bound of the minimal degree of $v$ is given by the degree of the field extension of $L$ over $\Q(v)$.

\begin{proposition}\label{main_prop} Let $L$ be a number field. Given any irrational number $v\in L$, the minimal degree of $v$ satisfies the following inequality:
\begin{equation}\label{eq}
\min \deg_L v \ge [L:\mathbb{Q}(v)].
\end{equation}
\end{proposition}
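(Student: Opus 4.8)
The plan is to fix an arbitrary primitive element $\alpha$ of $L$ and prove that $\deg_\alpha(v) \ge [L:\mathbb{Q}(v)]$; since this holds for every $\alpha \in \mathcal{A}$, taking the minimum over $\mathcal{A}$ immediately yields the inequality \eqref{eq}. So the whole problem reduces to the single-$\alpha$ estimate.

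First I would write $v = f(\alpha)$ with $f \in \mathbb{Q}[x]$ and set $d := \deg f = \deg_\alpha(v)$. Because $v$ is irrational, $f$ cannot be a constant polynomial, so $d \ge 1$. This lower bound on $d$ is worth recording now, because it is exactly what guarantees that the next step does not degenerate.

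The key observation is to regard $f(x) - v$ as a polynomial with coefficients in the subfield $\mathbb{Q}(v) \subseteq L$, rather than in $\mathbb{Q}$. Its leading coefficient coincides with that of $f$, a nonzero rational, so subtracting the constant $v$ does not affect the top term and $\deg\bigl(f(x) - v\bigr) = d$; moreover $\alpha$ is a root of it, since $f(\alpha) - v = 0$. Hence $\alpha$ is algebraic over $\mathbb{Q}(v)$, and the minimal polynomial of $\alpha$ over $\mathbb{Q}(v)$ divides $f(x) - v$, whence its degree is at most $d$.

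Finally I would invoke the tower $\mathbb{Q} \subseteq \mathbb{Q}(v) \subseteq \mathbb{Q}(v)(\alpha)$. Because $\alpha$ is primitive and $v \in L$, we have $\mathbb{Q}(v)(\alpha) = \mathbb{Q}(\alpha) = L$, so $[L:\mathbb{Q}(v)] = [\mathbb{Q}(v)(\alpha):\mathbb{Q}(v)]$ is precisely the degree of the minimal polynomial of $\alpha$ over $\mathbb{Q}(v)$, which the previous step bounds above by $d$. This gives $[L:\mathbb{Q}(v)] \le d = \deg_\alpha(v)$, as desired. I do not expect a serious obstacle here; the only points requiring care are confirming $d \ge 1$ (so that subtracting the constant $v$ preserves the degree) and the standard divisibility fact that the minimal polynomial of $\alpha$ over $\mathbb{Q}(v)$ divides any polynomial in $\mathbb{Q}(v)[x]$ vanishing at $\alpha$.
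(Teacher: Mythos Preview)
Your argument is correct and is essentially identical to the paper's own proof: form $g(x)=f(x)-v\in\mathbb{Q}(v)[x]$, observe that $\alpha$ is a root, and use $L=\mathbb{Q}(v)(\alpha)$ to bound $[L:\mathbb{Q}(v)]$ by $\deg g=\deg f$. Your write-up is in fact slightly more careful than the paper's in justifying $\deg g=\deg f$ via the irrationality of $v$, but the underlying idea is the same.
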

\begin{proof}
Let $K = \Q(v)$. Pick a primitive element $\alpha$ of $L$ such that $\deg_{\alpha}(v) = m$. Then $v$ can be written as
$$
v=f(\alpha)
$$
for some $f\in\mathbb{Q}[x]$ with $\deg f=m$. Let $g(x) := f(x) - v$ be a polynomial over $K$. Because $L = K(\alpha)$ and $\alpha$ is a root of $g$, it follows that
\begin{equation*}
    [L:K] \leq deg(g) = deg(f) = m.
\end{equation*}

%%Let $g$ and $h$ be the minimal polynomial of $v$ and $\alpha$, respectively. Then we have
%$$
%g(v)=g(f(\alpha))=0
%$$
%Therefore,  $$h(x)~|~g(f(x)) ~\mbox{and}~ \deg h\le \deg  (g\circ f)=\deg g\times\deg f.$$ Since $\deg h=[\mathbb{Q}(\alpha):\mathbb{Q}]$ and $\deg g=[\mathbb{Q}(v):\mathbb{Q}]$, we have $$[\mathbb{Q}(\alpha):\mathbb{Q}(v)]\le \deg f=m.$$
\end{proof}
\begin{remark}\label{rmk1} The lower bound is trivially equal to the minimal degree in some cases. 
\begin{itemize}
\item 
Consider the case when $v$ is a primitive element of $L$. Then we have $\deg_v(v)=1$. By Proposition $\ref{main_prop}$, it follows that 
\begin{equation*}
   \min \deg_L (v) = [L:\mathbb{Q}(v)] = 1.
\end{equation*}
In particular, if $L$ is a number field such that $[L:\Q] = p$ for some prime $p$. Because $L$ has no proper non-trivial subfields, any irrational $v \in L$ is a primitive element of $L$. Hence for any $v \in L$, $$\min \deg_L(v) = [L:\Q(v)]. $$
\item 
When $v$ is a rational number in $L$, then $\deg_{\alpha}(v)=0$ for any primitive elements of $L$. Therefore, Proposition $\ref{main_prop}$ does not hold in this case. However, if we consider the indices of subfields up to modulo extension degree of $L$ over $\Q$, the equality in Proposition $\ref{main_prop}$ also holds for rational numbers because 
 \begin{equation*}
   [L:\mathbb{Q}(v)] = [L:\mathbb{Q}] \equiv 0\pmod{[L:\mathbb{Q}]}.
\end{equation*}
\end{itemize}
\end{remark}

\section{Minimal degrees in Galois extensions of degree $4$ over $\Q$.
}
In this section, we show that the equality in Proposition \ref{main_prop} also holds for any degree $4$ Galois extensions over $\Q$.

\begin{theorem}\label{thm_4gal}
Let $L/\Q$ be a Galois extension with $[L:\Q]=4$. Then for any irrational number $v\in L$, there exists a primitive element $\alpha$ in $L$ such that 
$$
\deg_\alpha(v)=[L:\Q(v)] 
$$
\end{theorem}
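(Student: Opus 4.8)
The plan is to first reduce to the only nontrivial case and then to replace the search for $\alpha$ by a search for an element whose square is well controlled. Since $v$ is irrational, $[\Q(v):\Q]$ is a divisor of $4$ exceeding $1$, hence equals $2$ or $4$. If $[\Q(v):\Q]=4$, then $v$ is itself a primitive element and I would take $\alpha=v$, giving $\deg_\alpha(v)=1=[L:\Q(v)]$. So I would assume $[\Q(v):\Q]=2$, set $K:=\Q(v)$, note $[L:K]=2$, and aim to produce a primitive $\alpha$ with $\deg_\alpha(v)=2$.

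The key reduction is a completing-the-square reformulation. Observe that $\deg_\alpha(v)=2$ for some primitive $\alpha$ as soon as one can find a primitive element $\gamma\in L$ whose square lies in $K\setminus\Q$: writing $\{1,v\}$ for a $\Q$-basis of $K$ and $\gamma^2=R+sv$ with $R,s\in\Q$, the condition $\gamma^2\notin\Q$ forces $s\neq 0$, whence
\begin{equation*}
v = s^{-1}\gamma^2 - s^{-1}R
\end{equation*}
exhibits $v$ as a quadratic polynomial in $\gamma$ with vanishing cubic term and nonzero quadratic coefficient, so that $\deg_\gamma(v)=2$. Thus it suffices to construct a primitive $\gamma$ with $\gamma^2\in K\setminus\Q$.

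For the construction I would use the structure $L=K(\beta)$ with $\beta^2=d\in K^\times$ and $\beta\notin K$, and set $\gamma:=b\beta$ for a suitable $b\in K^\times$, so that $\gamma^2=b^2 d\in K$ while $\gamma\notin K$. The crucial point, and the step I expect to be the main obstacle, is guaranteeing that $\gamma$ is primitive: a naive basis-and-linear-algebra approach gets entangled in whether $\gamma$ accidentally lands in another quadratic subfield, which in the biquadratic case genuinely exists. This is resolved cleanly by the observation that $\gamma^2\in K\setminus\Q$ already forces $\Q(\gamma^2)=K$ (any non-rational element of a quadratic field generates it), so that $K=\Q(\gamma^2)\subseteq\Q(\gamma)\subseteq L$; since $\gamma\notin K$ we get $\Q(\gamma)\neq K$, and hence $\Q(\gamma)=L$ because $[L:K]=2$. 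Therefore primitivity is automatic once $\gamma^2\in K\setminus\Q$ and $\gamma\notin K$, with no case split on whether $\Gal(L/\Q)$ is cyclic or biquadratic.

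It then remains only to choose $b\in K^\times$ with $b^2 d\notin\Q$, which is always possible: if $d\notin\Q$ take $b=1$, and if $d\in\Q^\times$, then writing $K=\Q(\sqrt m)$, the choice $b=1+\sqrt m$ gives $\gamma^2=b^2 d=d(1+m)+2d\sqrt m\notin\Q$. Equivalently, the coset $d\,(K^\times)^2$ cannot lie entirely in the line $\Q\subset K$, since $1$ and $(1+\sqrt m)^2$ are $\Q$-linearly independent. With such $b$, the element $\gamma=b\beta$ is primitive and satisfies $\gamma^2\in K\setminus\Q$, so setting $\alpha=\gamma$ and applying the reformulation of the second paragraph yields $\deg_\alpha(v)=2=[L:\Q(v)]$, completing the proof.
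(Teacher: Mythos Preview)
Your proof is correct and in fact cleaner than the paper's. The paper splits on the Galois group: in the cyclic case it takes an arbitrary primitive element $\alpha$, completes the square in its minimal polynomial over $K=\Q(v)$, and uses uniqueness of the quadratic subfield to force the constant term out of $\Q$; in the biquadratic case it abandons this and writes down explicit generators such as $\sqrt{b}+\sqrt{ab}$ for each of the three quadratic subfields. Your argument instead manufactures the desired element directly as $\gamma=b\beta$ with $\beta^2=d\in K^\times$, and the observation that $\gamma^2\in K\setminus\Q$ already forces $K=\Q(\gamma^2)\subseteq\Q(\gamma)\subsetneq L$ disposes of the primitivity question uniformly, with no appeal to the structure of $\Gal(L/\Q)$. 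This buys you two things: no case split, and a proof that in fact never uses the Galois hypothesis, so your argument establishes the result for \emph{every} degree~$4$ extension $L/\Q$ (if $L$ has no quadratic subfield the statement is vacuous, and otherwise your construction applies). The paper's approach, by contrast, is tied to the two possible Galois groups and yields explicit formulas in the biquadratic case, which may be of independent computational interest but is not needed for the theorem.
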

\begin{proof}
The case where $\Q(v)=L$ is trivial by Remark \ref{rmk1}. It is enough to consider the case where $[L:\Q(v)]=2$. Since $\Gal(L/\Q)$ is isomorphic to $\Z/4Z$ or $\Z/2\Z \times \Z/2\Z$, we prove Theorem \ref{thm_4gal} by dividing into two cases. \vskip 2mm
(Case 1) $\Gal(L/\Q)\cong \Z/4Z$: \\
Let $L=\Q(\alpha)$ and $f(x)=x^2+f_1(v)x+f_0(v)$ be the minimal polynomial of $\alpha$ over $\Q(v)$ for some $f_i(x)\in\Q[x]$. Then we have
\begin{eqnarray*}
f(x)&=&x^2+f_1(v)x+f_0(v) \\
    &=& (x+f_1(v)/2)^2+f_0(v)-f_1(v)^2/4
\end{eqnarray*}
If $f_0(v)-f_1(v)^2/4\in\Q$, then $\alpha+f_1(v)/2$ has a minimal polynomial of degree 2 over $\Q$. Since $L$ has a unique subfield
$\Q(v)$ such that $[\Q(v):\Q]=2$, this implies that $\alpha+f_1(v)/2\in\Q(v).$ This is impossible because  $\alpha$ is a primitive element of $L$. Thus, $$f_0(v)-f_1(v)^2/4\notin \Q$$
Let $f_0(v)-f_1(v)^2/4=a_0+a_1v$ for $a_i\in\Q$. Then we have
$$
v=-\frac{1}{a_1}((\alpha+f_1(v)/2)^2+a_0)
$$
and so $\deg_\alpha(v)=2$.
\vskip 2mm
(Case 2) $\Gal(L/\Q)\cong \Z/2\Z \times \Z/2\Z$: \\
Without loss of generality, we may assume $L=\Q(\sqrt{a},\sqrt{b})$ where $a<b$ are square-free integers.
Note that $L$ has 3 subfields of index 2: $\Q(\sqrt{a}),\Q(\sqrt{b}), \Q(\sqrt{ab})$. Since
$$
\deg_\alpha(v)=\deg_\alpha(c_1v+c_0)
$$
for all $c_0,c_1\in\Q$ with $c_1\neq 0$, we can assume that $v$ is one of $\sqrt{a}, \sqrt{b}, \sqrt{ab}$. If $v=\sqrt{a}$, we take $\alpha=\sqrt{b}+ \sqrt{ab}$. Then we have
\begin{eqnarray*}
\alpha^2&=&2b\sqrt{a}+(b+ab)\\
        &=&2b\cdot v+(b+ab) 
\end{eqnarray*}
Therefore, we have 
$$
v=\frac{1}{2b}\alpha^2-\frac{a+1}{2}
$$
and so $\deg_\alpha(v)=2$. In other cases, we can prove the theorem in an analogous manner. 
\end{proof}

\section{Minimal degrees in index $4$ subfields of triquadratic number fields}
In this section, we show that the minimal degrees of elements in  index $4$ subfields of triquadratic number fields are equal to the lower bounds of minimal degrees in Proposition \ref{main_prop}.

\begin{theorem}
Let $L = \Q(\sqrt{A},\sqrt{B},\sqrt{C})$ where $A,B,C,$ and $ABC$ are distinct square-free non-zero integers. Fix an element $v \in L$ with $[L:\Q(v)] = 4$. Then there exists a primitive element $\alpha$ in $L$ such that 
$$
\deg_\alpha(v)=[L:\Q(v)].
$$
\end{theorem}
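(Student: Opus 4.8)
The plan is to realize, for every admissible $v$, the lower bound $[L:\Q(v)]=4$ of Proposition \ref{main_prop} by exhibiting a single explicit primitive element; then only the upper bound $\deg_\alpha(v)\le 4$ has to be produced. First I would reduce to one normal form. Since $\deg_\alpha(v)=\deg_\alpha(c_1v+c_0)$ for $c_1\in\Q^\times,\ c_0\in\Q$, and since $[\Q(v):\Q]=[L:\Q]/[L:\Q(v)]=2$, the field $\Q(v)$ is one of the seven quadratic subfields $\Q(\sqrt d)$ with $d\in\{A,B,C,AB,AC,BC,ABC\}$, and I may take $v=\sqrt d$. Any such $d$ is a nonzero vector in the square-class group $(\Z/2)^3$, hence extends to a basis of independent generators, so after relabelling I may assume $v=\sqrt A$ with $A$ one of the three defining generators (the squarefree representatives change but the hypotheses are preserved).

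The crux is a clean criterion. Write $K=\Q(\sqrt A)$ and let $\mu(x)=\mu_0(x)+\sqrt A\,\mu_1(x)$, with $\mu_0,\mu_1\in\Q[x]$, be the degree-$4$ minimal polynomial of a primitive $\alpha$ over $K$. If $\mu_1$ is a nonzero constant $q_0$, then $\mu(\alpha)=0$ yields $\sqrt A=-q_0^{-1}\mu_0(\alpha)$, a polynomial of degree $4$ in $\alpha$, so $\deg_\alpha(\sqrt A)\le 4$; together with Proposition \ref{main_prop} this gives equality (and one checks degree $4$ in fact forces $\mu_1$ constant). To make this usable I would decompose $\alpha=u+\sqrt A\,w$ with $u,w\in F_0:=\Q(\sqrt B,\sqrt C)$ and let $T:F_0\to\Q$ be the rational-part functional. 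Expanding $(u+\sqrt A\,w)^k$ and collecting odd powers of $\sqrt A$ shows that the coefficient of $\sqrt A$ in $\alpha^k$ equals $T(W_k)$, where $W_1=w$, $W_2=2uw$, $W_3=3u^2w+Aw^3$, and $W_4=4uw(u^2+Aw^2)$. Thus $\mu_1$ is a nonzero constant exactly when $T(W_1)=T(W_2)=T(W_3)=0$ and $T(W_4)\ne 0$.

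I would then verify this for $\alpha=\sqrt{AB}+\sqrt{AC}+B\sqrt{C}-C\sqrt{B}$, that is $u=B\sqrt C-C\sqrt B$ and $w=\sqrt B+\sqrt C$. Its support $\{(1,1,0),(1,0,1),(0,1,0),(0,0,1)\}$ spans $(\Z/2)^3$, so the stabilizer $\bigcap_{\chi}\ker\chi$ is trivial and $\alpha$ is primitive. A short computation gives $uw=(B-C)\sqrt{BC}$ and $u^2=BC(B+C)-2BC\sqrt{BC}$, from which $T(W_1)=T(W_2)=T(W_3)=0$ and $T(W_4)=8BC(B-C)(A-BC)$. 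Here $B\ne C$ and $B,C\ne 0$ are immediate, and $A\ne BC$ holds because $A=BC$ would force $ABC=A^2$, contradicting the squarefreeness of $ABC$; hence $T(W_4)\ne 0$, $\mu_1$ is a nonzero constant, and $\deg_\alpha(\sqrt A)=4$.

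The main obstacle is finding the coefficients, not the computation itself. The natural guess $\alpha=(\sqrt B+\sqrt C)(1+\sqrt A)$ fails, since the $\sqrt A$-coefficient of its square is $2(B+C)\ne 0$. The conditions $T(W_k)=0$ cannot be met by arranging the support of $\alpha$ to avoid the grades that produce $\sqrt A$ (a brief multiset argument in $(\Z/2)^3$ shows this is impossible); they must arise from cancellation, which forces the $\sqrt B,\sqrt C$ coefficients of $u$ into the ratio $-C:B$. Pinning down this cancellation and observing that the only remaining nondegeneracy, $T(W_4)\ne 0$, is governed precisely by the hypothesis that $ABC$ be squarefree, is the heart of the argument; the remaining six cases follow by the relabelling in the first paragraph.
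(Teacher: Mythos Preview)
Your proof is correct and follows essentially the same approach as the paper. Both choose $\alpha$ in the $\Q$-span of $\{\sqrt{B},\sqrt{C},\sqrt{AB},\sqrt{AC}\}$ (your choice $(-C,B,1,1)$ versus the paper's $(1,B,-C,1)$ for the coefficients), impose the vanishing of the $\sqrt{A}$-coefficient of $\alpha^2$, and then extract $\sqrt{A}$ from a degree-$4$ combination; your packaging via the minimal polynomial over $\Q(\sqrt{A})$ and the trace conditions $T(W_k)=0$ is a cleaner formulation of the same computation the paper carries out directly on $\alpha^2$ and $\alpha^4$.
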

\begin{proof}
It suffices to show that $\min \deg_L (\sqrt{A}) = 4$. Pick 
\begin{equation*}
    \alpha = a \sqrt{B} + b \sqrt{C} + c \sqrt{AB} + d \sqrt{AC}
\end{equation*}
for some $a, b, c, d \in \Q^\times$. This ensures that $\alpha$ is a primitive element of $L$. Then
\begin{equation*}
\alpha^2=X+Y\sqrt{A}+Z\sqrt{BC}+W\sqrt{ABC},
\end{equation*}
where 
\begin{eqnarray*}
X&=& Ba^2+Cb^2+ABc^2+ACd^2,\\
Y&=& 2Bac+2Cbd, \\
Z&=& 2ab+2Acd,\\
W&=& 2bc+2ad.
\end{eqnarray*}
Observe that
\begin{eqnarray*}
    \alpha^4 &=&  (X^2+AY^2+BCZ^2+ABCW^2)+(2XY + 2BCZW)\sqrt{A}+(2XZ+2AYW)\sqrt{BC}\\
             &&+(2YZ+2XW)\sqrt{ABC}.
\end{eqnarray*}
In order for a $\Q$-linear span of $\{1,\alpha^2,\alpha^4\}$ to contain $\sqrt{A}$, we require that the ratio of coefficients of $\sqrt{BC}$ and $\sqrt{ABC}$ in $\alpha^2$ and $\alpha^4$ be the same. In other words,
\begin{equation}\label{eq_coeff}
\frac{Z}{W}=\frac{2XZ + 2AYW}{2YZ + 2XW}
\end{equation}
By solving  (\ref{eq_coeff}), we have
\begin{equation*}
2Y(Z^2 - AW^2) = 0
\end{equation*}
Since $Z, W\in\Q$ and $A$ is square-free, we have $Y=0$. Therefore, we can rewrite $\alpha^2$ and $\alpha^4$ as:
\begin{eqnarray*}
\alpha^2&=& X+Z\sqrt{BC}+W\sqrt{ABC}\\
\alpha^4&=& (X^2+BCZ^2+ABCW^2)+2BCZW\sqrt{A}+2XZ\sqrt{BC}+2XW\sqrt{ABC}
\end{eqnarray*}
This implies that
$$
\alpha^4-2X\alpha^2=(-X^2+BCZ^2+ABCW^2)+2BCZW\sqrt{A}
$$
As long as $Z, W\neq 0$, we have
$$
\sqrt{A}=\frac{1}{2BCZW}\{\alpha^4-2X\alpha^2-(-X^2+BCZ^2 + ABCW^2)\}
$$
Hence we can prove the Theorem if we find $a,b,c,d \in \Q^\times$ satisfying the following 3 conditions: 
$$
\begin{array}{lllll}
\s{C1} & : & 2Bac + 2Cbd&=0   &(\Rightarrow Y=0)\\
\s{C2} & : & 2ab+2Acd&\neq 0&(\Rightarrow Z\neq0)\\
\s{C3} & : & 2bc+2ad&\neq 0&(\Rightarrow W\neq0)
\end{array}
$$
There are infinitely many solutions which satisfy the above system of equations. In particular, $(a, b, c, d) = (1, B, -C, 1)$ satisfies the condition \s{C1}, \s{C2}, and \s{C3}. Condition \s{C2} shows that $B \neq AC$, which is true because $L$ is a triquadratic extension. Condition \s{C3}, which suggests that $2 - 2BC \neq 0$, holds because $B, C$ are distinct square-free integers. Thus, setting the primitive element $\alpha = \sqrt{B} + B\sqrt{C} - C\sqrt{AB} + \sqrt{AC}$, we obtain $\deg_\alpha (\sqrt{A}) = 4$.
\end{proof}

\begin{example}
Let $L = \Q(\sqrt{2}, \sqrt{3}, \sqrt{5})$. Table \ref{ex1} shows a list of primitive elements $\alpha$ of $L$ and polynomials of deg 4 in $\alpha$ which represent $\sqrt{2}, \sqrt{3},\sqrt{5},\sqrt{6},\sqrt{10},\sqrt{15}$ and $\sqrt{30}$.

\begin{table}[h]
\centering
\begin{tabular}{|c|c|c|}\hline
Elements & $\alpha$ & Polynomials of deg 4 \\\hline
$\sqrt{2}$ & $\sqrt{3} + 3\sqrt{5} - 5\sqrt{6} + \sqrt{10}$ & $\frac{1}{11760} \left( \alpha^4 - 416 \alpha^2 +16804\right)$ \\\hline
$\sqrt{3}$ & $\sqrt{2} + 2\sqrt{5} - 5\sqrt{6} + \sqrt{15}$  & $\frac{1}{9360} \left( \alpha^4 - 374 \alpha^2 + 18489 \right)$ \\\hline
$\sqrt{5}$& $\sqrt{2} + 2\sqrt{3} - 3\sqrt{10} + \sqrt{15}$  & $\frac{1}{3120} \left( \alpha^4 -238\alpha^2+7105\right)$ \\\hline
$\sqrt{6}$& $\sqrt{2} -10\sqrt{3} + 2\sqrt{5} + \sqrt{30}$  & $\frac{1}{20160} \left( \alpha^4 -704\alpha^2 + 73104 \right)$ \\\hline
$\sqrt{10}$& $\sqrt{2} + 2\sqrt{3} - 6\sqrt{5} + \sqrt{30}$  & $\frac{1}{6720} \left( \alpha^4 -448\alpha^2 + 25360 \right)$ \\\hline
$\sqrt{15}$& $\sqrt{2} + 2\sqrt{3} + 3\sqrt{5} -3\sqrt{30}$  & $\frac{1}{10320} \left( \alpha^4 -658\alpha^2 + 54865 \right)$ \\\hline
$\sqrt{30}$& $\sqrt{2} + 2\sqrt{3} + 3\sqrt{10} - 6\sqrt{15}$  & $\frac{1}{21120} \left( \alpha^4 -1288\alpha^2 + 210880 \right)$ \\\hline
\end{tabular}
\caption{\label{ex1} minimal degrees of some elements in $\Q(\sqrt{2}, \sqrt{3}, \sqrt{5})$}
\end{table}

\end{example}

\section{Minimal degrees in index $2$ subfields of triquadratic number fields}

\subsection{Necessary and sufficient conditions for minimal degrees}
% Unlike the previous case, there exist infinitely many elements $v \in M$ such that $\min \deg_L (v) \neq 2$. We show that this nontrivial observation is related to arithmetic properties of certain families of elliptic curves over $\mathbb{Q}$.

In this section, we show that the minimal degrees of elements $v$ in triquadratic number fields $L$ can be strictly greater than $[L:\Q(v)]$. In fact, we prove that the minimal degrees of elements in index $2$ subfields of triquadratic number fields are determined by arithmetic properties of certain families of elliptic curves. 

\begin{theorem}\label{index2}
Let $L = \mathbb{Q}(\sqrt{A}, \sqrt{B}, \sqrt{C})$ be a triquadratic number field where  $A,B,C,$ and $ABC$ are distinct square-free non-zero integers. Then for any non-zero rational number $a$, 
$\min \deg_L (\sqrt{A} + a\sqrt{B}) = 2$ if and only if the rank of the elliptic curve $E: Y^2 = X(X-a^2B)(X-(a^2B-A))$ is at least $1$ or the torsion subgroup of $E$ is isomorphic to $\Z/2\Z \times \Z/6\Z$.
\end{theorem}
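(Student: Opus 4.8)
The plan is to turn the equality $\min\deg_L v = 2$ into the existence of a suitable rational point on a genus-one curve whose Jacobian is $E$, and then to read off the answer from the Mordell--Weil group of $E$.

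First I would reduce the problem. Since $\sqrt A+a\sqrt B$ has four distinct conjugates $\pm\sqrt A\pm a\sqrt B$, it is a primitive element of the biquadratic field $K:=\Q(\sqrt A,\sqrt B)$, so $\Q(v)=K$ and $[L:\Q(v)]=2$; by Proposition \ref{main_prop} we have $\min\deg_L v\ge 2$, and the task is to decide when $\deg_\alpha(v)=2$ is attainable. If $\deg_\alpha(v)=2$, completing the square in $v=c_2\alpha^2+c_1\alpha+c_0$ and replacing $\alpha$ by the still-primitive element $\alpha+c_1/(2c_2)$ reduces to $\alpha^2=p+qv$ with $p\in\Q$, $q\in\Q^\times$. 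Writing $\alpha=u+w\sqrt C$ over $K$, the condition $\alpha^2\in K$ forces $2uw=0$; since $w=0$ would place $\alpha$ in the degree-$4$ field $K$, we get $u=0$, i.e. $\alpha=w\sqrt C$ with
\[ Cw^2=p+q\sqrt A+qa\sqrt B,\qquad w\in K,\ q\in\Q^\times, \]
and conversely any such $w$ gives a primitive $\alpha=w\sqrt C$ with $\deg_\alpha(v)=2$. Thus $\min\deg_L v=2$ if and only if this equation is solvable.

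Next I would expand $w=e+f\sqrt A+g\sqrt B+h\sqrt{AB}$. Demanding that $Cw^2$ have vanishing $\sqrt{AB}$-coefficient and $\sqrt B$-to-$\sqrt A$ coefficient ratio equal to $a$ produces two quadrics
\[ Q_1:\ eh+fg=0,\qquad Q_2:\ eg+Afh=a(ef+Bgh) \]
in $\mathbb P^3$, while $q\ne0$ is the open condition $ef+Bgh\ne0$. On $\mathcal C:=\{Q_1=Q_2=0\}$ the vanishing $ef+Bgh=0$ forces all three of $eh+fg,\ eg+Afh,\ ef+Bgh$ to vanish, i.e. $w^2\in\Q$, whose only rational solutions are the four coordinate points $[1{:}0{:}0{:}0],\dots,[0{:}0{:}0{:}1]$. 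Hence $\min\deg_L v=2$ if and only if $\mathcal C$ carries a rational point off these four.

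I would then identify the Jacobian. Because the roots $\pm(\sqrt A\pm a\sqrt B)$ of the pencil discriminant are distinct, $\mathcal C$ is a smooth genus-one curve, and its Jacobian is computed from $\det(xM_1+M_2)$ for the Gram matrices $M_1,M_2$ of $Q_1,Q_2$; a direct determinant evaluation gives this proportional to $x^4-2(a^2B+A)x^2+(a^2B-A)^2$. Passing to the bielliptic quotient $(t,s)\mapsto(t^2,ts)$ and following the standard $2$-isogeny identifies this Jacobian with $E$, with full rational $2$-torsion at $0,a^2B,a^2B-A$. Since $\mathcal C$ already has the rational coordinate points, $\mathcal C\cong E$, and I would check that the four coordinate points are exactly $E[2]$ (the degenerate locus corresponds to the $2$-torsion). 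Consequently $\min\deg_L v=2$ iff $E(\Q)\supsetneq E[2]$, that is, iff $E$ has positive rank or torsion strictly larger than $(\Z/2)^2$.

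The remaining and most delicate step is to show that the only enlargement of $(\Z/2)^2$ occurring here is $\Z/2\times\Z/6$. By Mazur's classification the candidates are $\Z/2\times\Z/4$, $\Z/2\times\Z/6$, $\Z/2\times\Z/8$, and the first and third contain a point of order $4$; such a point doubles to some $(e_i,0)$ and forces both differences $e_i-e_j$ to be rational squares. I expect the heart of the argument to be excluding this from the squarefree hypotheses on $A,B,ABC$ (for instance at the root $a^2B$ one would need both $a^2B$ and $A$ to be squares, impossible for squarefree $A\ne1$), thereby eliminating order-$4$ torsion and leaving $\Z/2\times\Z/6$ as the only possibility. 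This torsion bookkeeping---pinning the coordinate points to $E[2]$ and ruling out the order-$4$ cases---is the main obstacle; the reduction and the Jacobian computation are comparatively routine.
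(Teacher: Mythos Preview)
Your reduction to $\alpha=w\sqrt C$ and the pair of quadrics $Q_1,Q_2$ is correct and coincides exactly with the paper's: after completing the square the paper's Galois argument yields $b_1=b_2=b_4=0$, and their system~(\ref{system_eq}) in $b_3,b_5,b_6,b_7$ is your $Q_1=Q_2=0$ together with the open condition $ef+Bgh\ne0$. From that point the two arguments diverge only in execution. The paper dehomogenizes by $b_5$, eliminates one variable to reach the plane cubic~(\ref{disc}), writes down an explicit rational change of coordinates~(\ref{rational_change}) to the Weierstrass model $E$, and then transports the four points of $\tilde E[2]$ back to verify by hand that they are exactly the solutions with $\tilde z=0$, i.e.\ the non-primitive ones; the converse is then built explicitly from a non-$2$-torsion point.

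Your route through the pencil determinant is more geometric and also correct, but two steps you leave implicit are worth spelling out. First, ``bielliptic quotient then standard $2$-isogeny gives $E$'' works because the intermediate curve $Y^2=X\bigl(X^2-2(a^2B{+}A)X+(a^2B{-}A)^2\bigr)$ has a \emph{unique} rational $2$-torsion point (the quadratic factor has discriminant $16a^2AB$, and $AB$ is never a square for distinct squarefree $A,B$), so its unique $2$-isogeny is forced to be the dual of your bielliptic map; this is what pins the determinant quartic to $E$ itself rather than merely to an isogenous curve. Second, the claim that the four coordinate points are $E[2]$ follows cleanly from the involution $(e{:}f{:}g{:}h)\mapsto(-e{:}f{:}g{:}-h)$ on $\mathcal C$, which one checks preserves $Q_1=Q_2=0$ and whose fixed locus is exactly those four points; an involution on a genus-one curve with four fixed points has fixed set a coset of the $2$-torsion. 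The paper gets the same conclusion by direct substitution.

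For the torsion classification the paper does not use Mazur plus halving; it invokes Ono's classification of torsion on $Y^2=X(X+M)(X+N)$ (their Lemma~\ref{torsion}). Your halving approach is fine in principle, but you must run it at all three roots, not just $a^2B$: at $0$ the condition is that $-a^2B$ and $A-a^2B$ are both squares, and at $a^2B-A$ that $-A$ and $a^2B-A$ are both squares, so the squarefreeness of $A,B$ alone does not dispose of these without further argument.
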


We state the following lemma which classifies the torsion subgroups of the  elliptic curve $E: Y^2 = X(X-a^2B)(X-(a^2B-A))$.
\begin{lemma}\label{torsion}
Let $A, B$ be distinct square-free non-zero integers. Given any non-zero rational number $a$, let $E: Y^2 = X(X-a^2B)(X-(a^2B-A))$ be an elliptic curve over $\Q$. Then the torsion subgroup $E_{Tor}(\Q)$ is isomorphic to $\Z/2\Z \times \Z/2\Z$ or $\Z/2\Z \times \Z/6\Z$.
In particular, $E_{Tor}(\Q)$ is isomorphic to $\Z/2\Z \times \Z/6\Z$ if and only if there exist integers $p,q$ satisfying 
\begin{align*}
    -a^2B &= p^4 + 2p^3q \\
    -a^2B + A &= 2pq^3 + q^4
\end{align*}

\end{lemma}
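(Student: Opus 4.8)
The plan is to determine $E_{Tor}(\Q)$ by combining Mazur's theorem with explicit $2$- and $3$-descent information. Since $E$ is presented as $Y^2=X(X-e_2)(X-e_3)$ with three distinct rational roots $e_1=0$, $e_2=a^2B$, $e_3=a^2B-A$ (distinctness follows from $A$, $B$, $A-B$ being nonzero, which is built into the hypotheses), the full $2$-torsion $\{O,(e_1,0),(e_2,0),(e_3,0)\}$ is rational, so $E_{Tor}(\Q)\supseteq\Z/2\Z\times\Z/2\Z$. By Mazur's classification the only possibilities with full rational $2$-torsion are $\Z/2\Z\times\Z/2m\Z$ for $m\in\{1,2,3,4\}$, so to prove the first assertion it suffices to show that $E$ has no rational point of order $4$.

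For the second step I would invoke the classical halving criterion: for $E:Y^2=(X-e_1)(X-e_2)(X-e_3)$, the point $(e_i,0)$ lies in $2E(\Q)$ — equivalently equals $2Q$ for a rational $Q$, which is then forced to have order exactly $4$ since $4Q=O$ — if and only if $e_i-e_j$ and $e_i-e_k$ are both squares in $\Q$. Applied to our three roots, this requires one of the pairs $\{-a^2B,\,A-a^2B\}$, $\{a^2B,\,A\}$, $\{a^2B-A,\,-A\}$ to consist of two rational squares. Since $A,B$ are square-free and distinct, the pair $\{a^2B,\,A\}$ cannot be two squares (that would force $A=B=1$); the other two pairs would require $-B$ or $-A$ to be a square, which is excluded for the (real) triquadratic fields under consideration, where $A,B>0$. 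Hence $E$ has no rational point of order $4$, and $E_{Tor}(\Q)$ is either $\Z/2\Z\times\Z/2\Z$ or $\Z/2\Z\times\Z/6\Z$, the latter occurring exactly when $E$ has a rational point of order $3$.

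It remains to match the existence of a rational $3$-torsion point with the displayed Diophantine system. Writing $r=a^2B$ and $s=a^2B-A$, a rational point of order $3$ has $X$-coordinate $x_0$ a rational root of the $3$-division polynomial $\psi_3(X)=3X^4-4(r+s)X^3+6rsX^2-r^2s^2$, equivalently $(x_0^2-rs)^2=4x_0^2(x_0-r)(x_0-s)$, together with $x_0(x_0-r)(x_0-s)$ a rational square. For the constructive direction, given integers $p,q$ with $-a^2B=p^4+2p^3q$ and $-a^2B+A=2pq^3+q^4$ I would set $r=-p^3(p+2q)$, $s=-q^3(2p+q)$ and $x_0=(pq)^2$; a direct computation gives $x_0-r=p^2(p+q)^2$ and $x_0-s=q^2(p+q)^2$, whence $(x_0^2-rs)^2=4x_0^2(x_0-r)(x_0-s)$ and $x_0(x_0-r)(x_0-s)=\bigl((pq)^2(p+q)^2\bigr)^2$, so $\bigl((pq)^2,\,(pq)^2(p+q)^2\bigr)$ is a rational point of order $3$ and $E_{Tor}(\Q)\cong\Z/2\Z\times\Z/6\Z$. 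For the converse I would start from a rational $3$-torsion point: the identity $(x_0^2-rs)^2=4x_0^2(x_0-r)(x_0-s)$ already shows $(x_0-r)(x_0-s)$ is a square, and combined with $x_0(x_0-r)(x_0-s)=\square$ this forces $x_0$ to be a square; writing $x_0-r$ and $x_0-s$ in their common square class and feeding the relation back in shows that each of $x_0$, $x_0-r$, $x_0-s$ is individually a perfect square, after which solving $u^2=x_0$, $e^2=x_0-r$, $g^2=x_0-s$ recovers the parameters $p,q$.

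The step I expect to be the main obstacle is this last (forward) direction. The single identity $\psi_3(x_0)=0$ only yields that \emph{products} of the relevant quantities are squares; promoting this to individual squareness of $x_0-r$ and $x_0-s$, choosing the signs of the square roots consistently so that $e+g$ becomes a perfect square, and finally clearing denominators so that $p$ and $q$ emerge as \emph{integers} is where the genuine arithmetic input — square-freeness of $A,B$ and the precise shape of $r,s$ — must be used. By contrast, the $2$-torsion elimination and the constructive direction of the $3$-torsion analysis are routine verifications once the halving criterion and the substitution $x_0=(pq)^2$ are in place.
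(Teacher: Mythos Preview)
Your approach differs from the paper's: after writing $a=m/n$ with $(m,n)=1$ and passing to the isomorphic integral model $Y^2=X(X-m^2B)(X-(m^2B-n^2A))$, the paper simply invokes Ono's Main Theorem~1 on Euler's concordant forms, which classifies the torsion of such curves and delivers the $(p,q)$-parametrisation of the $\Z/2\Z\times\Z/6\Z$ case in one stroke. Your route through Mazur, the halving criterion, and the $3$-division polynomial is more self-contained; the constructive half of the $3$-torsion equivalence (verifying that $x_0=(pq)^2$ yields a rational $3$-torsion point) is correct as written.

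There is, however, a genuine gap in your exclusion of rational $4$-torsion. You discard the pairs $\{-a^2B,\,A-a^2B\}$ and $\{a^2B-A,\,-A\}$ by asserting that $-B$ and $-A$ cannot be squares ``for the (real) triquadratic fields under consideration, where $A,B>0$''. The lemma carries no positivity hypothesis: $A,B$ are only required to be distinct square-free non-zero integers, and the ambient triquadratic fields in the paper may well be imaginary. If $A=-1$ then $-A=1$ is a square, and with $B=2$, $a=2$ one gets $E:Y^2=X(X-8)(X-9)$, on which $(12,12)$ satisfies $2\cdot(12,12)=(9,0)$, so $E$ has a rational point of order $4$. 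Thus without the extra hypothesis $A,B\notin\{1,-1\}$ your halving argument genuinely fails---and indeed the statement itself requires that restriction. (The paper's appeal to Ono rests on the same claim that $\pm m^2B$ and $\pm n^2A$ are non-squares, so it carries the identical tacit assumption.) Once $A,B\neq\pm1$ is added, your $4$-torsion step goes through; the remaining substantive work is exactly the forward direction of the $3$-torsion analysis you flag, where upgrading the square-class relations to individual squares and then extracting \emph{integer} parameters $p,q$ is precisely the content Ono's theorem packages for you.
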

\begin{proof}
% \begin{theorem}[K. Ono, 1996. Main Theorem 1 of \cite{Ono}]\label{kono}
% Let $M, N$ be any distinct non-zero integers. Consider the elliptic curve $E^{M,N}: y^2 = x(x+M)(x+N)$.
% \begin{enumerate}
%     \item The torsion subgroup $E_{Tor}^{M,N}(\Q)$ contains $\Z/2\Z \times \Z/4\Z$ if one of $(M,N)$, $(-M, N-M)$, and $(-N, M-N)$ is a tuple of squares.
%     \item The torsion subgroup $E_{Tor}^{M,N}(\Q)$ is isomorphic to $\Z/2\Z \times \Z/6\Z$ if there exists integers $p$ and $q$ such that $\frac{p}{q} \neq -2, -1, -\frac{1}{2}, 0,$ or $1$, and $M = p^4 + 2p^3q$ and $N = 2pq^3 + q^4$.
%     \item For all other cases, $E_{Tor}^{M,N}(\Q)$ is isomorphic to $\Z/2\Z \times \Z/2\Z$.
% \end{enumerate}
% \end{theorem}

Choose a rational number $a = \frac{m}{n}$ such that $(m,n) = 1$. Then the elliptic curve $E:Y^2 = X(X-a^2B)(X-(a^2B-A))$ is isomorphic to $E':Y^2 = X(X-m^2B)(X-(m^2B-n^2A))$. Because $A, B$ are square-free integers, $\pm m^2B$ and $\pm n^2A$ are not squares. Then Lemma \ref{torsion} follows from Main Theorem 1 in \cite{Ono}.
\end{proof}

We now prove Theorem \ref{index2}.
\begin{proof}
Suppose $\alpha$ is a primitive element of $L$ which satisfies $\deg_{\alpha}(\sqrt{A}+a\sqrt{B})=2$. Thus 
\begin{equation}\label{eq_index2}
\sqrt{A}+a\sqrt{B}=a_2\alpha^2 +a_1\alpha +a_0
\end{equation}
for some $a_0,a_1\in \Q$ and non-zero $a_2\in\Q$.
Let $f(x)=x^2 +\frac{a_1}{a_2} x +\frac{a_0-(\sqrt{A}+a\sqrt{B})}{a_2}$. Let $\sigma_1$ be the identity element in  $\Gal(L/\Q)$ and $\sigma_2$ be an element of $\Gal(L/\Q)$ satisfying $$\sigma_2(\sqrt{A})= \sqrt{A}, ~\sigma_2(\sqrt{B})= \sqrt{B},~\sigma_2(\sqrt{C})=- \sqrt{C}. $$
Since $\sigma_1(\alpha)$ and $\sigma_2(\alpha)$ are roots of $f(x)$,  we have
\begin{eqnarray}
\sigma_1(\alpha)+\sigma_2(\alpha) &=& -\frac{a_1}{a_2} \label{eq_sum}\\
\sigma_1(\alpha)\sigma_2(\alpha) &=& \frac{a_0-(\sqrt{A}+a\sqrt{B})}{a_2} \label{eq_pd}
\end{eqnarray}
Let $\alpha=b_0 +b_1 \sqrt{A} +b_2 \sqrt{B}+b_3 \sqrt{C}+b_4 \sqrt{AB}+b_5 \sqrt{AC}+b_6 \sqrt{BC}+b_7 \sqrt{ABC}$ for some $b_i\in\Q$.
By  (\ref{eq_sum}), we have
$$
2(b_0 +b_1 \sqrt{A} +b_2 \sqrt{B}+b_4 \sqrt{AB})\in\Q. 
$$
Hence, $b_1 =b_2=b_4 =0$ and so $$\alpha=b_0 +b_3 \sqrt{C}+b_5 \sqrt{AC}+b_6 \sqrt{BC}+b_7 \sqrt{ABC}.$$ This gives
\begin{eqnarray*}
\sigma_1(\alpha)\sigma_2(\alpha)&=&b_0^2 -(b_3 \sqrt{C}+b_5 \sqrt{AC}+b_6 \sqrt{BC}+b_7 \sqrt{ABC})^2 \\
             &=&-\{ (ABCb_7^2 +BCb_6^2 +ACb_5^2 +Cb_3^2 -b_0^2 )+\sqrt{A}(2BCb_7b_6+ 2Cb_3b_5)\\&&~~~+\sqrt{B}(2ACb_7b_5+2Cb_3b_6)+\sqrt{AB}(2Cb_7b_3+2Cb_5b_6) \}   
\end{eqnarray*}
By  (\ref{eq_pd}), we have the following system of equations:

\begin{equation}\label{system_eq}
\left\{
\begin{array}{rl}
ABCb_7^2 +BCb_6^2 +ACb_5^2 +Cb_3^2 -b_0^2 &= -\frac{a_0}{a_2} \\
2BCb_7b_6+ 2Cb_3b_5 &= \frac{1}{a_2} \\
2ACb_7b_5+2Cb_3b_6 &=  \frac{a}{a_2} \\
2Cb_7b_3+2Cb_5b_6 &= 0 
\end{array} 
\right.
\end{equation}

Note that if $b_5 =0$, then $b_3=0$ or $b_7=0$ by the 4th equation of  (\ref{system_eq}). However, by the 2nd and the 3rd equation of  (\ref{system_eq}), this is impossible. Thus, $b_5\ne0$. Let
\begin{equation}\label{system_eq1.5}
\left\{
\begin{array}{rl}
\tilde{x} &= \frac{b_3}{b_5} \\
\tilde{y} &= \frac{b_6}{b_5} \\
\tilde{z} &= \frac{b_7}{b_5}
\end{array} 
\right.
\end{equation}
Then we obtain the following equations from  (\ref{system_eq}): 
\begin{equation} \label{system_eq2}
\left\{
\begin{array}{rlll}
aB\tilde{y}\tilde{z}+a\tilde{x}&=&A\tilde{z}+\tilde{x}\tilde{y}&\\
\tilde{x}\tilde{z}+\tilde{y}&=&0&  
\end{array}  
\right.
\end{equation}
Eliminating $\tilde{x}$-variables in  (\ref{system_eq2}), we have
\begin{equation}\label{disc}
C: aB\tilde{y}\tilde{z}^2-a\tilde{y}=A\tilde{z}^2-\tilde{y}^2,
\end{equation}
which is a curve in the affine plane $\mathbb{A}^2_\Q = \text{Spec}(\Q[\tilde{y},\tilde{z}])$. 
The projectivization $\tilde{C} \subset \mathbb{P}^2_\Q = \text{Proj}(\Q[\tilde{y}, \tilde{z}, \tilde{w}])$ given by 
\begin{equation} \label{system_eq2.5}
    \tilde{C}: aB\tilde{y}\tilde{z}^2-a\tilde{y}\tilde{w}^2=A\tilde{z}^2\tilde{w}-\tilde{y}^2\tilde{w}
\end{equation}
defines an elliptic curve over $\mathbb{Q}$. Applying a rational change of coordinates
\begin{equation} \label{rational_change}
\left\{
\begin{array}{rl}
    \tilde{y} &= AX \\
    \tilde{z} &= Y \\
    \tilde{w} &= aBX - (a^3B^2 - aAB)Z
\end{array}  
\right.
\end{equation}
we obtain a Weierstrass model $\tilde{E} \subset \mathbb{P}^2_\Q = \text{Proj}(\Q[X,Y,Z])$ given by
\begin{equation*}
    \tilde{E} : Y^2Z = X^3 - (2a^2B - A)X^2Z + a^2B(a^2B - A)XZ^2 =
    X(X-a^2BZ)(X-(a^2B-A)Z),
\end{equation*}
or when $Z = 1$, an affine curve $E \subset \mathbb{A}^2_\Q = \text{Spec}(\Q[X,Y])$ defined as
\begin{equation} \label{ell_min_deg}
    E: Y^2 = X(X-a^2B)(X-(a^2B-A)).
\end{equation}
Lemma \ref{torsion} shows that $E_{Tor}(\mathbb{Q}) = \mathbb{Z}/2\mathbb{Z} \times \mathbb{Z} / 2\mathbb{Z}$ or $\Z/2\Z \times \Z/6\Z$. We note that 
\begin{equation*}
    \tilde{E}[2](\mathbb{Q}) = \left\{ [0: 1: 0], [0: 0: 1], [a^2B: 0: 1], [a^2B - A: 0: 1] \right\}
\end{equation*}
The corresponding rational solutions in terms of $[\tilde{y}:\tilde{z}:\tilde{w}]$ are:
\begin{equation*}
    \tilde{C}[2](\mathbb{Q}) = \left\{ [0: 1: 0], [0: 0: -a^3B^2 + aAB], [a^2AB : 0 : aAB], [A(a^2B-A) : 0 : 0] \right\}
\end{equation*}
Hence, $\Q$-rational solutions of (\ref{disc}) induced from $2$-torsion points of $E$ satisfy $\tilde{z} = 0$. This implies that $b_7 = 0$, so $\alpha$ is not a primitive element of $\mathbb{Q}(\sqrt{A}, \sqrt{B}, \sqrt{C})$. We note that rational points of $E$ which are not $2$-torsion points satisfy $X, Y \neq 0$. Hence, the existence of a primitive element $\alpha$ implies that the elliptic curve $E$ either contains a $3$-torsion point or has rank at least $1$.

To prove the converse of the theorem, it is enough to find $(a_0, a_1, a_2)\in\Q^3$ and a primitive element  $\alpha$ of $L$ satisfying  (\ref{index2}). Suppose the desired elliptic curve $E: Y^2 = X(X-a^2B)(X-(a^2B-A))$ has rank at least $1$, or contains a $3$-torsion point. Let $\tilde{E}$ be the projectivization of $E$. Choose a non 2-torsion point $[X:Y:1] \in \tilde{E}(\mathbb{Q})$. This guarantees that $X \neq a^2B - A$ and $Y \neq 0$.

Let $\tilde{C}$ be the projective curve from (\ref{system_eq2.5}). Using the rational change of coordinates from  (\ref{rational_change}), we obtain a point $[\tilde{y} : \tilde{z} : \tilde{w}] \in \tilde{C}(\Q)$ where
\begin{align*}
    \tilde{y} &= AX \\
    \tilde{z} &= Y \\
    \tilde{w} &= aBX - (a^3B^2 - aAB).
\end{align*}
Because $X \neq a^2B - A$, we have $w \neq 0$. Hence we obtain a point $(\tilde{y}, \tilde{z}) \in C(\Q)$ by de-homogenizing the variables:
\begin{align*}
    \tilde{y} &= \frac{AX}{aBX - (a^3B^2 - aAB)} \\
    \tilde{z} &= \frac{Y}{aBX - (a^3B^2 - aAB)}
\end{align*}
Using the $\Q$-rational point $(\tilde{y}, \tilde{z}) \in C(\Q)$, we can obtain infinitely many solutions of $b_3, b_5, b_6,$ and $b_7$. To see this, recall from (\ref{system_eq1.5}) that $\tilde{y} = b_6/b_5$, $\tilde{z} = b_7/b_5$. Using the equation $\tilde{x}\tilde{z} + \tilde{y} = 0$ from  (\ref{system_eq2}), we can also obtain the value of $\frac{b_3}{b_5}$ because
\begin{equation*}
    \tilde{x} = \frac{b_3}{b_5} = -\frac{\tilde{y}}{\tilde{z}} = \frac{AX}{Y}
\end{equation*}
Note that $\frac{b_3}{b_5}$ is well defined because $Y \neq 0$. Since all of $\tilde{x}, \tilde{y}, \tilde{z}$ are non-zero, none of $b_3, b_5, b_6, b_7$ are zero. This implies that $\alpha$ is a primitive element of $L$. 

We verified that a non 2-torsion rational point $[X:Y:1] \in \tilde{E}(Q)$ gives rise to non-zero solutions $b_3, b_5, b_6, $and $b_7$ satisfying 
\begin{equation} \label{intermediate_sol}
\left\{
\begin{array}{rl}
    \frac{b_3}{b_5} &= \frac{AX}{Y} \\
    \frac{b_6}{b_5} &= \frac{AX}{aBX - (a^3B^2 - aAB)}\\
    \frac{b_7}{b_5} &= \frac{Y}{aBX - (a^3B^2 - aAB)}
\end{array}  
\right.
\end{equation}
Using (\ref{system_eq}) and (\ref{intermediate_sol}), we derive $(a_0, a_1, a_2) \in \Q^3$ satisfying (\ref{index2}). The assumption that $a \neq 0$ implies that the third equation from (\ref{system_eq}) does not vanish to $0$.
\begin{equation} \label{intermediate_sol2}
    2AC b_7 b_5 + 2C b_3 b_6 = -\frac{a}{a_2}
\end{equation}
Substituting the expressions from (\ref{intermediate_sol}), we obtain that the LHS of (\ref{intermediate_sol2}) is
\begin{equation*}
    2AC b_7 b_5 + 2C b_3 b_6 = 2C b_5^2 \left( A \frac{b_7}{b_5} + \frac{b_3}{b_5} \frac{b_6}{b_5} \right) = 2C b_5^2 \frac{AY^2 + A^2X^2}{Y(aBX - (a^3B^2 - aAB)}.
\end{equation*}
Because $A$ is square-free and $C, b_5$ are non-zero, the LHS of the third equation is nonzero. We can hence obtain the value of $a_2$ such that 
$$a_2=-\frac{a}{2AC b_7 b_5 + 2C b_3 b_6}$$ 
By choosing a random $b_0\in\Q$ and using the first equation from  (\ref{system_eq}), we can obtain the value of $a_0$ such that 
\begin{equation*}
  a_0=a_2\cdot(  ABC b_7^2 + BC b_6^2 + AC b_5^2 + Cb_3^2 - b_0^2) 
\end{equation*}
Using  (\ref{eq_sum}), we can also determine $a_1 = -2b_0 a_2$. Since these choices of $(a_0, a_1, a_2)$ and a primitive element  $\alpha$ satisfy  (\ref{eq_sum}) and  (\ref{eq_pd}), they also satisfy  (\ref{eq_index2}).
\end{proof}

% \begin{remark}
% A similar argument used in the proof of the above theorem shows that for any rational number $a$ and $b$, $\min \deg_L (\sqrt{A} + a\sqrt{B} + b\sqrt{AB}) = 2$ if and only if the following equation has a non-zero rational solution $(x,y,z)$:
% \begin{equation}\label{system_eq3}
% C: \left\{
%     \begin{array}{rl}
%     aByz + ax &= Az + xy \\
%     bByz + bx &= xz + y
%     \end{array}
% \right.
% \end{equation}
%  (\ref{system_eq3}) comes from a slight variation of  (\ref{system_eq}), where $x = \frac{b_3}{b_5}$, $y = \frac{b_6}{b_5}$, and $z = \frac{b_7}{b_5}$.
% \begin{equation*}
% \left\{
%     \begin{array}{rl}
% ABCb_7^2 +BCb_6^2 +ACb_5^2 +Cb_3^2 -b_0^2 &= -\frac{a_0}{a_2} \\
% 2BCb_7b_6+ 2Cb_3b_5 &= \frac{1}{a_2} \\
% 2ACb_7b_5+2Cb_3b_6 &=  \frac{a}{a_2} \\
% 2Cb_7b_3+2Cb_5b_6 &= \frac{b}{a_2}
%     \end{array} 
% \right.
% \end{equation*}
% The nonzero rational points on $C$ correspond to nonzero rational points $(y,z)$ on $C'$ such that $z \neq b, \frac{1}{Bb}$ or $y \neq a, \frac{A}{aB}$, assuming that both $a, b \neq 0$. (Theorem \ref{index2} deals with the case where $a$ or $b$ is equal to zero.)
% \begin{equation}
%     C': Abz - Az^2 + y^2 - bBy^2z + aByz^2 - ay = 0
% \end{equation}
% 
% We note that if the curve $C$ is nonsingular, then there exists a suitable rational change of coordinates which changes the equation of $C$ into a minimal Weierstrass form $E_C$. Hence in a similar manner to the statement of Theorem \ref{index2}, if $E_C(\Q)$ has rank at least $1$, then the minimal degree is equal to $2$.
% \end{remark}

Theorem \ref{index2} relates the problem of computing minimal degrees of elements in $L$ to the problem of understanding arithmetic properties of families of elliptic curves. As an immediate corollary, we show that computing the minimal degree of $\sqrt{A} + a\sqrt{B}$ can be considered as a generalization of the congruent number problem \cite{Tunnell,Koblitz}.
\begin{corollary}
Let $L = \mathbb{Q}(\sqrt{B}, \sqrt{2B}, \sqrt{C})$ be a triquadratic number field for any distinct square-free non-zero integers $B$ and $C \neq 2, 2B$. Then $\min \deg_L (\sqrt{B} + \sqrt{2B}) = 2$ if and only if $B$ is a congruent number.
\end{corollary}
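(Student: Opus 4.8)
The plan is to specialize Theorem~\ref{index2} to the case $A=B$ and $a=1$, where the two square-free integers appearing in the primitive element are $B$ and $2B$. Concretely, I would set the first radical to be $\sqrt{B}$ and the second to be $\sqrt{2B}$, so that in the notation of Theorem~\ref{index2} I am computing $\min\deg_L(\sqrt{B}+\sqrt{2B})$ inside $L=\Q(\sqrt{B},\sqrt{2B},\sqrt{C})$. The key observation is that $\sqrt{2B}=\sqrt{2}\sqrt{B}$, so writing $\sqrt{B}+\sqrt{2B}=\sqrt{B}(1+\sqrt{2})$ reveals that this element lives in the biquadratic subfield $\Q(\sqrt{B},\sqrt{2})=\Q(\sqrt{B},\sqrt{2B})$, and hence $[L:\Q(\sqrt{B}+\sqrt{2B})]=2$, placing us exactly in the index-$2$ regime that Theorem~\ref{index2} governs. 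I would then apply Theorem~\ref{index2} with the role of ``$A$'' played by $B$ and the role of ``$B$'' played by $2B$ (after reducing to square-free representatives), so that the associated elliptic curve becomes $E:Y^2=X(X-2B)(X-(2B-B))=X(X-2B)(X-B)$.

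Next I would identify this curve with the classical congruent-number curve. Recall that $B$ is a congruent number if and only if the elliptic curve $E_B: Y^2=X^3-B^2X=X(X-B)(X+B)$ has positive rank \cite{Koblitz,Tunnell}. The plan is to produce an explicit $\Q$-isomorphism (a simple scaling and translation of coordinates) between $E:Y^2=X(X-B)(X-2B)$ and the standard congruent-number curve $E_B$, or at least a $\Q$-isogeny, so that the two curves share the same Mordell--Weil rank over $\Q$. By Theorem~\ref{index2}, $\min\deg_L(\sqrt{B}+\sqrt{2B})=2$ holds if and only if $E$ has rank at least $1$ or $E_{Tor}(\Q)\cong\Z/2\Z\times\Z/6\Z$. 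Since the congruent-number curve is well known to have trivial torsion apart from the full $2$-torsion $\Z/2\Z\times\Z/2\Z$ (a congruent-number curve $E_B$ never has a rational $3$-torsion point), the torsion clause can be eliminated, leaving the rank condition as the sole criterion. This reduces the equivalence precisely to ``rank of $E$ is at least $1$,'' which matches the defining property of a congruent number.

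The hard part will be verifying that the torsion subgroup of $E:Y^2=X(X-B)(X-2B)$ is genuinely $\Z/2\Z\times\Z/2\Z$ and never $\Z/2\Z\times\Z/6\Z$, since Lemma~\ref{torsion} tells us both are a priori possible. To dispatch this, I would invoke the integer-solvability criterion of Lemma~\ref{torsion}: the $\Z/2\Z\times\Z/6\Z$ case occurs if and only if there exist integers $p,q$ with $-2B=p^4+2p^3q$ and $-B=2pq^3+q^4$ (using $A=B$, $a=1$, and the appropriate square-free normalization). I would show this system has no solutions for our family—e.g.\ by dividing the two equations to eliminate $B$ and deriving a contradiction from the square-freeness of $B$ and the coprimality constraints, or equivalently by appealing to the known fact that congruent-number curves carry no rational point of odd order. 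Once the torsion clause is excluded, the remaining equivalence ``rank $\ge 1$ $\iff$ $B$ congruent'' follows directly from the isomorphism in the previous step, and the restrictions $C\neq 2,2B$ simply ensure that $L$ is a genuine triquadratic field so that Theorem~\ref{index2} applies.
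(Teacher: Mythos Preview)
Your plan is correct and follows the same strategy as the paper: specialize Theorem~\ref{index2}, exclude the $\Z/2\Z\times\Z/6\Z$ torsion alternative via Lemma~\ref{torsion}, and identify the remaining rank condition with the congruent-number criterion. The only difference is bookkeeping. The paper assigns the roles the other way around---taking ``$A$'' to be $2B$ and ``$B$'' to be $B$ with $a=1$---so that Theorem~\ref{index2} produces the congruent-number curve $Y^2=X(X-B)(X+B)$ \emph{directly}, and no isomorphism step is needed. Your labeling yields $Y^2=X(X-B)(X-2B)$, which is indeed $\Q$-isomorphic to the congruent-number curve via the simple translation $X\mapsto X+B$ (so your proposed ``scaling and translation'' reduces to a pure translation); this works, but the paper's choice avoids it entirely. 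For the torsion exclusion, the paper carries out the explicit argument you allude to: from the system in Lemma~\ref{torsion} one gets $2B=\pm(p-q)(p+q)^3$; since $B$ is square-free the left side is divisible by $2$ but not by $8$, whereas the right side is either odd (if $p,q$ have opposite parity) or divisible by $16$ (if $p,q$ have the same parity), a contradiction. Your alternative of invoking the known torsion of congruent-number curves is equally valid once the translation identifies the two curves.
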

\begin{proof}
Consider the elliptic curve $E: Y^2 = X(X-B)(X+B)$ over $\Q$. If there exist integers $p$ and $q$ which satisfy
\begin{align*}
    B &= p^4 + 2p^3 q \\
    -B &= 2pq^3 + q^4
\end{align*}
then we have
\begin{equation}
    2B = p^4 + 2p^3q - 2pq^3 - q^4 = -(p-q)(p+q)^3
\end{equation}
Because $B$ is square-free, the left hand side of the above equation is divisible by $2$ but not divisible by $8$. However, the right hand side of the equation is either odd (when $p$ is odd and $q$ is even, and vice versa) or divisible by $16$ (when both $p,q$ are odd or even), a contradiction. By Lemma \ref{torsion}, $E_{Tor}(\Q)$ is isomorphic to $\Z/2\Z \times \Z/2\Z$.

Theorem \ref{index2} implies that $\min \deg_L (\sqrt{B} + \sqrt{2B}) = 2$ if and only if the rank of the elliptic curve $E$ is at least $1$. This is equivalent to the statement that $B$ is a congruent number \cite{Koblitz, Tunnell}.
\end{proof}
% 
% \begin{remark}
% Theorem \ref{index2} also shows that the problem of computing minimal degrees is a special case of Euler's concordant forms problem, which focuses on finding non-trivial integer solutions $(X, Y, Z, W)$ to
% \begin{equation*}
%     X^2 + MY^2 = Z^2 \text{ and } X^2 + NY^2 = W^2
% \end{equation*}
% given distinct non-zero integers $M$ and $N$. For further discussions on Euler's concordant forms problem, %we refer to \cite{Ono}.
%\end{remark}

\begin{example}
Let $L$ be the triquadratic number field $\mathbb{Q}(\sqrt{2}, \sqrt{3}, \sqrt{5})$. Then we have $$\deg_{\alpha}(\sqrt{2}+\sqrt{3})>2$$ for every primitive element $\alpha$ in $L$ because the rank of the elliptic curve $E: Y^2 = X^3 - 4X^2 + 3X$ is equal to $0$, and the torsion subgroup of $E$ is isomorphic to $\Z/2\Z \times \Z/2\Z$. On the other hand, there exists a primitive element $\alpha$ such that $$\deg_{\alpha}(\sqrt{2}+2\sqrt{3}) = 2,$$ because $(X,Y) = (8, 8)$ is a non-torsion $\mathbb{Q}$-rational point of $E: Y^2 = X^3 - 22X^2 + 120X$. Associated to the rational point is the primitive element $\alpha = 1 - 2 \sqrt{5} + \sqrt{10} - \frac{4}{3} \sqrt{15} - \frac{2}{3} \sqrt{30}$ and we have $$\frac{3}{20}\alpha^2 - \frac{3}{10}\alpha - \frac{207}{20} = \sqrt{2} + 2\sqrt{3}.$$
\end{example}

\begin{example}
Let $L$ be the triquadratic number field $\Q(\sqrt{5}, \sqrt{7}, \sqrt{11})$. Then there exists a primitive element $\alpha$ such that $$\deg_{\alpha}(\sqrt{11} + 5\sqrt{35}) = 2.$$ $(X,Y)  = (900, 900)$ is a 3-torsion $\Q$-rational point of $E: Y^2 = X(X-5^2 \times 35)(X-(5^2 \times 35-11)) = X(X-875)(X-864)$. We note that the rank of the elliptic curve $E$ is $0$. Associated to the torsion point is the primitive element $\alpha = 1 - 11\sqrt{5} + \sqrt{55} + \frac{55}{7} \sqrt{7} + \frac{5}{7} \sqrt{77}$ and we have $$-\frac{7}{220} \alpha^2 + \frac{7}{110} \alpha + \frac{7913}{220} = \sqrt{11} + 5\sqrt{35}.$$
\end{example}

\begin{remark}
Let $v = \sqrt{A} + a\sqrt{B}$ be a fixed element in $L = \Q(\sqrt{A}, \sqrt{B}, \sqrt{C})$. Theorem \ref{index2} shows that even if the elliptic curve $E: Y^2 = X(X-a^2B)(X-(a^2B-A))$ has rank $0$, as long as $E_{Tor}(\Q) = \Z/2\Z \times \Z/6\Z$ it is possible to find a primitive element $\alpha$ such that $\deg_\alpha(\sqrt{A} + a\sqrt{B}) = 2$ for any non-zero $a\in\Q$. Lemma \ref{torsion} shows that $E_{Tor}(\Q) = \Z/2\Z \times \Z/6\Z$ if and only if there exist integers $p,q$ satisfying \begin{align*}
    -a^2B &= p^4 + 2p^3q \\
    -a^2B + A &= 2pq^3 + q^4
\end{align*}
Rearranging the above equation gives
\begin{equation*}
    -A = (p+q)^3(p-q)
\end{equation*}
Because $A$ is square-free, we require that $p + q = \pm 1$. Hence, we have
\begin{equation}
    \begin{cases}
    a^2B &= -p^3(\pm2 - p) \\
    A &= 1\pm2p 
    \end{cases}
\end{equation}
If $p$ is a square-free integer such that $-p(\pm 2 - p)$ and $1 \pm 2p$ are both square-free, then there exists a primitive element $\alpha \in \Q(\sqrt{A}, \sqrt{B}, \sqrt{C})$ such that $\deg_\alpha(\sqrt{A} + a\sqrt{B}) = 2$. For example, choosing $p = 5$ and $a = 5$ deduces the previous example. 

\end{remark}

\subsection{Minimal degrees in families of triquadratic number fields}

It is a natural question to calculate the probability that the minimal degree of a given element in $M \subset L = \mathbb{Q}(\sqrt{A}, \sqrt{B}, \sqrt{C})$ is equal to $[L : M]$. We show that the desired probability depends on the choice of a family of tuples of form $(L, M, v)$, where $L = \Q(\sqrt{A}, \sqrt{B}, \sqrt{C})$, $M = \Q(\sqrt{A}, \sqrt{B})$, and $v = \sqrt{A} + a \sqrt{B}$ for some distinct non-zero square-free integers $A, B, C$ and some rational number $a$.

\begin{theorem}
Let $L = \mathbb{Q}(\sqrt{A}, \sqrt{B}, \sqrt{C})$ for any fixed distinct square-free non-zero integers $A, B,$ and $C$. Fix a non-zero rational number $a \in \Q$. Let $S$ be the set of primes including $2$, $\infty$, and all finite places at which the elliptic curve $E: Y^2 = X(X-a^2B)(X-(a^2B-A))$ has bad reduction. Fix an integer 
\begin{equation*}
    D = 8 \prod_{\substack{ p \in S \; \\ \; p \; \text{finite} }} p.
\end{equation*}
Let $M_n$ be the following family of number fields.
    \begin{equation*}
        M_n := \{ L_\gamma = \mathbb{Q}(\sqrt{\gamma A}, \sqrt{\gamma B}, \sqrt{C}) \; | \; \gamma \leq n, \gamma \in \mathbb{N}, \; (\gamma, D) = 1,\; \gamma \; \text{square-free} \} 
    \end{equation*}
    Given any fixed non-zero $a\in\Q$, we define the probability that $L_\gamma$ has an element in a degree $4$ subfield $M_\gamma := \mathbb{Q}(\sqrt{\gamma A}, \sqrt{\gamma B})$ with minimal degree greater than $[L_\gamma:M_\gamma]$ as follows.
    \begin{equation*}
        \mathbb{P}(L_\gamma \in M_n : \min \deg_{L_\gamma} (\sqrt{\gamma A} + a\sqrt{\gamma B}) > 2)
        := \frac{\# \{L_\gamma \in M_n \; | \; \min \deg_{L_\gamma} (\sqrt{\gamma A} + a\sqrt{\gamma B}) > 2\}}{\# M_n}
    \end{equation*}
    Then the lower bound of the probability converges to the following value as $n \to \infty$.
    \begin{equation*}
        \liminf_{n \to \infty} \mathbb{P}(L_\gamma \in M_n : \min \deg_{L_\gamma} (\sqrt{\gamma A} + a\sqrt{\gamma B}) > 2) \geq \frac{1}{\prod_{j = 0}^\infty (1 + 2^{-j})}
    \end{equation*}
\end{theorem}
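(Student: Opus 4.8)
The plan is to reinterpret the statement as a question about the ranks and torsion of quadratic twists of a single fixed elliptic curve, and then to feed this into the known distribution of $2$-Selmer groups in quadratic twist families. First I would record the key algebraic reduction: the curve attached to the twisted field $L_\gamma$, namely $E_\gamma : Y^2 = X(X-a^2\gamma B)(X-(a^2\gamma B - \gamma A))$, is isomorphic over $\Q$ to the quadratic twist $E^{(\gamma)}$ of the fixed curve $E : Y^2 = X(X-a^2B)(X-(a^2B-A))$ by $\gamma$; indeed the substitution $X \mapsto \gamma X$, $Y \mapsto \gamma Y$ carries $E_\gamma$ to $w^2 = \gamma f(u)$, where $f$ is the defining cubic of $E$. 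By Theorem \ref{index2}, $\min \deg_{L_\gamma}(\sqrt{\gamma A} + a\sqrt{\gamma B}) > 2$ holds if and only if $E^{(\gamma)}$ has rank $0$ and $E^{(\gamma)}_{Tor}(\Q) \cong \Z/2\Z \times \Z/2\Z$. Thus the entire problem is converted into estimating the density, among square-free $\gamma \le n$ with $(\gamma, D) = 1$, of twists that have rank $0$ and carry no rational $3$-torsion.

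Next I would dispose of the torsion condition by showing it alters the count only negligibly. By Lemma \ref{torsion} and the computation in the Remark following the Corollary, $E^{(\gamma)}$ acquires torsion $\Z/2\Z \times \Z/6\Z$ only if $-\gamma A = (p+q)^3(p-q)$ for some integers $p,q$. Since the prime divisors of $A$ divide the discriminant of $E$ and hence lie in $S$, the condition $(\gamma, D) = 1$ forces $\gcd(\gamma, A) = 1$, so $\gamma A$ is square-free; therefore $(p+q)^3$ is square-free, giving $p + q = \pm 1$. Eliminating $\gamma$ between this relation and the accompanying equation $-a^2\gamma B = p^4 + 2p^3 q$ leaves a nonzero degree-$4$ polynomial equation in $p$, which has only finitely many integer roots. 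Hence only finitely many $\gamma$ produce $\Z/2\Z \times \Z/6\Z$ torsion, and consequently
\[
\mathbb{P}\big(L_\gamma \in M_n : \min \deg_{L_\gamma}(\sqrt{\gamma A} + a\sqrt{\gamma B}) > 2\big) \;\ge\; \mathbb{P}\big(\mathrm{rank}\, E^{(\gamma)} = 0\big) - o(1).
\]

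Then I would bound the rank-$0$ density from below by the density of twists with minimal $2$-Selmer group, using the exact sequence $0 \to E^{(\gamma)}(\Q)/2E^{(\gamma)}(\Q) \to \mathrm{Sel}_2(E^{(\gamma)}/\Q) \to \Sha(E^{(\gamma)}/\Q)[2] \to 0$. Because $E$, and hence every twist $E^{(\gamma)}$, has full rational $2$-torsion, $\dim_{\mathbb{F}_2} E^{(\gamma)}(\Q)[2] = 2$, so $\dim_{\mathbb{F}_2}\mathrm{Sel}_2(E^{(\gamma)}/\Q) = \mathrm{rank}\,E^{(\gamma)} + 2 + \dim_{\mathbb{F}_2}\Sha(E^{(\gamma)}/\Q)[2] \ge 2$, with equality forcing rank $0$. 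Thus $\{\dim_{\mathbb{F}_2}\mathrm{Sel}_2 = 2\} \subseteq \{\mathrm{rank} = 0\}$ and $\mathbb{P}(\mathrm{rank}\,E^{(\gamma)} = 0) \ge \mathbb{P}(\dim_{\mathbb{F}_2}\mathrm{Sel}_2(E^{(\gamma)}/\Q) = 2)$. Finally I would invoke the distribution of $2$-Selmer ranks in quadratic twist families of elliptic curves with full rational $2$-torsion (Heath-Brown, Swinnerton-Dyer, and Kane): the congruence constraints encoded by $D$ (divisibility by $8$ and by every finite bad prime in $S$) are exactly the hypotheses these theorems require, and they yield that the proportion of twists attaining the minimal $2$-Selmer dimension $2$ tends to $\prod_{j=0}^\infty (1 + 2^{-j})^{-1}$. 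Chaining the three inequalities and passing to the liminf gives the asserted lower bound.

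I expect the genuine obstacle to be this last step: correctly matching the normalization and hypotheses of the Selmer-distribution theorems to the family $M_n$, and in particular verifying that across the allowed residue classes of $\gamma$ modulo $8$ and the bad primes the minimal-rank proportion is at least $\prod_{j\ge0}(1+2^{-j})^{-1}$ — it is this inequality, together with the possibility that rank $0$ coexists with $\Sha[2] \neq 0$ on a positive-density set, that forces a liminf lower bound rather than an equality. The remaining ingredients (the twist identification, the reduction through Theorem \ref{index2}, the finiteness of $\Z/2\Z \times \Z/6\Z$ twists, and the Selmer-to-rank comparison) are comparatively routine.
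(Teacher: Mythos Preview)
Your proposal is correct and follows essentially the same architecture as the paper's proof: identify $E_\gamma$ as the quadratic twist $E^{(\gamma)}$, invoke Theorem~\ref{index2} to translate the minimal-degree condition into ``rank $0$ and no rational $3$-torsion,'' bound rank~$0$ from below via $\dim_{\mathbb{F}_2}\mathrm{Sel}_2 = 2$ using the descent exact sequence, and then appeal to Swinnerton-Dyer and Kane for the Selmer distribution. The only substantive difference is in disposing of the $\mathbb{Z}/2\mathbb{Z}\times\mathbb{Z}/6\mathbb{Z}$ torsion case: you argue via the explicit Diophantine parametrization of Lemma~\ref{torsion} (reducing to a degree-$4$ polynomial in $p$), whereas the paper uses the cleaner observation that the $\overline{\Q}$-isomorphism $(X,Y)\mapsto(\gamma X,\gamma^{3/2}Y)$ carries $E[3](\overline{\Q})$ bijectively onto $E_\gamma[3](\overline{\Q})$, so only finitely many square-free $\gamma$ can make a $3$-torsion point rational---this avoids having to check that every prime dividing $A$ lies in $S$.
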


\begin{proof}
We first note that quadratic twists of $E$ by $\gamma$ is
\begin{equation} \label{quad_ell_min_deg}
    E_\gamma : Y^2 = X(X - a^2B\gamma)(X - (a^2B - A)\gamma)
\end{equation}
Theorem \ref{index2} shows that the above elliptic curve is induced from finding a primitive element $\alpha \in L_\gamma = \mathbb{Q}(\sqrt{\gamma A}, \sqrt{\gamma B}, \sqrt{C})$ such that $\deg_\alpha(\sqrt{\gamma A} + a \sqrt{\gamma B}) = 2$. Let $\mathcal{M}_n$ be the family of quadratic twists of elliptic curves $E$ such that
\begin{equation}
    \mathcal{M}_n := \{ E_\gamma : Y^2 = X(X - a^2B\gamma)(X - (a^2B - A)\gamma) \; | \; \gamma \leq n, \gamma \in \mathbb{N}, (\gamma, D) = 1,\; \gamma \; \text{square-free} \}.
\end{equation}
% Then Theorem \ref{index2} shows that there is a bijective correspondence between the following sets.
% % \begin{equation} \label{correspondence_set}
% %     M_n \Longleftrightarrow \mathcal{M}_n
% % \end{equation}
% \begin{equation} \label{correspondence_ell}
%     \{L_\gamma \in M_n \; | \; \min \deg_{L_\gamma} (\sqrt{\gamma A} + a\sqrt{\gamma B}) > 2\} \Longleftrightarrow \{E_\gamma \in \mathcal{M}_n \; | \; E_\gamma(\Q) = \Z/2\Z \times \Z/2\Z \}
% \end{equation}
By Theorem 4.2 in \cite[Chap. X.4]{Silverman}, we can consider the following short exact sequence
\begin{equation*}
    0 \to E_\gamma(\mathbb{Q})/2E_\gamma(\mathbb{Q}) \to \text{Sel}_2(E_\gamma) \to \Sha_{E_\gamma}[2] \to 0
\end{equation*}
where $\text{Sel}_2(E_\gamma)$ is the 2-Selmer group and $\Sha_{E_\gamma}$ is the Tate-Shafarevich group.
Lemma \ref{torsion} implies that $E[2](\Q) = \Z/2\Z \times \Z/2\Z$. Hence, we have
\begin{equation}
    \text{dim}_{\mathbb{F}_2} E_\gamma(\mathbb{Q})/2E_\gamma(\mathbb{Q}) = \text{rank}(E_\gamma) + 2 \leq \text{dim}_{\mathbb{F}_2} \text{Sel}_2(E_\gamma).
\end{equation}
In particular, if $\text{dim}_{\mathbb{F}_2} \text{Sel}_2(E_\gamma) = 2$, then $\text{rank}(E_\gamma) = 0$.
Hence, the map of sets
\begin{equation*} \label{correspondence_ineq}
    \{E_\gamma \in \mathcal{M}_n \; | \; \text{dim}_{\mathbb{F}_2} (\text{Sel}_2 (E_\gamma)) = 2, E_\gamma[3](\Q) = 0 \}  \to 
    \{L_\gamma \in M_n \; | \; \min \deg_{L_\gamma} (\sqrt{\gamma A} + a\sqrt{\gamma B}) > 2\}
\end{equation*}
which sends $E_\gamma$ to $L_\gamma$ is injective.

By Swinnerton-Dyer \cite[Theorem 1]{Swinnerton-Dyer} and Kane \cite[Theorem 3]{Kane}, we have
\begin{equation*}
        \lim_{n \to \infty} \frac{\# \{\gamma \leq n \; | \; \gamma \; \text{square-free}, \; (\gamma, D) = 1,\; \text{dim}_{\mathbb{F}_2} (\text{Sel}_2(E_\gamma)) = 2\}}{\# \{\gamma \leq n \; | \; \gamma \; \text{square-free}, (\gamma, D) = 1\;\}} = \frac{1}{\prod_{j = 0}^\infty (1 + 2^{-j})}
    \end{equation*}
% We state the following results proved in \cite{Swinnerton-Dyer} and \cite{Kane} which explicitly computes the probability distribution of ranks of $2$-Selmer groups of $E: y^2 = (x-c_1)(x-c_2)(x-c_3)$.
% \begin{theorem}[Swinnerton-Dyer, 2008 \cite{Swinnerton-Dyer}. Kane, 2013 \cite{Kane}.]
%     Let $E: y^2 = (x - c_1)(x - c_2)(x - c_3)$ be an elliptic curve such that $E[2](\mathbb{Q}) = \mathbb{Z}/2\mathbb{Z} \times \mathbb{Z}/2\mathbb{Z}$. Let $\{\alpha_i\}$ be the set of integers such that $\alpha_0 = \alpha_1 = 0$ and
%     \begin{equation*}
%         \alpha_{i+2} = \frac{2^i}{\prod_{j=1}^i (2^j - 1) \prod_{j=0}^\infty (1 + 2^{-j})}
%     \end{equation*}
%     Then the following equation holds.
%     \begin{equation*}
%         \lim_{n \to \infty} \frac{\# \{\gamma \leq n \; | \; \gamma \; \text{square-free}, \; \text{dim}_{\mathbb{F}_2} (\text{Sel}_2(E_\gamma)) = i\}}{\# \{\gamma \leq n \; | \; \gamma \; \text{square-free}\}} = \alpha_i
%     \end{equation*}
% \end{theorem}
Hence, for any fixed non-zero rational number $a$, we have
\begin{align*}
    & \liminf_{n \to \infty} \mathbb{P}(L_\gamma \in M_n : \min \deg_{L_\gamma} (\sqrt{\gamma A} + a\sqrt{\gamma B}) > 2) \\
    &\geq  \liminf_{n \to \infty} \frac{\# \{\gamma \leq n \; | \; \gamma \; \text{square-free}, (\gamma, D) = 1,\; \text{dim}_{\mathbb{F}_2} (\text{Sel}_2(E_\gamma)) = 2, E_\gamma[3](\Q) = 0\}}{\# \{\gamma \leq n \; | \; \gamma \; \text{square-free}, (\gamma, D) = 1\}} \\
    &\geq \frac{1}{\prod_{j = 0}^\infty (1 + 2^{-j})} - \lim_{n \to \infty} \frac{\# \{\gamma \leq n \; | \; \gamma \; \text{square-free}, (\gamma, D) = 1, \; E_\gamma[3](\Q) \neq 0\}}{\# \{\gamma \leq n \; | \; \gamma \; \text{square-free}, (\gamma, D) = 1\}}
\end{align*}
Note that given a fixed elliptic curve $E$, there only exist finitely many square-free $\gamma$ such that $E_\gamma[3](\Q) \neq 0$. There exists an isomorphism 
\begin{equation*}
    E(\overline{\Q}) \to E_\gamma (\overline{\Q}), \; \; \; (X,Y) \mapsto (\gamma X, \gamma^{\frac{3}{2}}Y).
\end{equation*}
The above isomorphism sends torsion points of order $3$ of $E$ to thoes of $E_\gamma$. Because $E[3](\overline{\Q})$ is finite, there are only finitely many square-free $\gamma$ such that $E_\gamma[3](\Q) \neq 0$. Using this observation, we obtain:
\begin{equation}
    \lim_{n \to \infty} \frac{\# \{\gamma \leq n \; | \; \gamma \; \text{square-free}, (\gamma, D) = 1, \; E_\gamma[3](\Q) \neq 0\}}{\# \{\gamma \leq n \; | \; \gamma \; \text{square-free}, (\gamma, D) = 1\}} = 0.
\end{equation}
We can conclude that
\begin{equation}
    \liminf_{n \to \infty} \mathbb{P}(L_\gamma \in M_n : \min \deg_{L_\gamma} (\sqrt{\gamma A} + a\sqrt{\gamma B}) > 2) \geq \frac{1}{\prod_{j = 0}^\infty (1 + 2^{-j})}.
\end{equation}
\medskip
\end{proof}

The above theorem shows that there exist infinitely many triquadratic number fields $L$ such that the minimal degrees of elements in degree $4$ subfields $M$ of $L$ are strictly greater than $[L:M] = 2$. 
\begin{theorem}\label{inf}
Let $L = \mathbb{Q}(\sqrt{A}, \sqrt{B}, \sqrt{C})$ be a triquadratic number field for any fixed distinct square-free non-zero integers $A, B, C $ and $ABC$. Suppose there exists a pair of non-zero rational numbers $(a, b)$ such that
\begin{equation}\label{triples}
    a^2 - 1 = (B-A) b^2
\end{equation}
Let $M_{a}$ be the family of number fields such that 
\begin{equation*}
    M_{a} := \{ L_B = \mathbb{Q}(\sqrt{A}, \sqrt{B}, \sqrt{C}) \; | \; B \neq a \}
\end{equation*}
Then for every number field $L_B$ in $M_{a}$, we have 
$$\min \deg_{L_B}(\sqrt{A} + a\sqrt{B}) = 2.$$
\end{theorem}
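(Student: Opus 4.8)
The plan is to reduce the statement, via Theorem \ref{index2}, to exhibiting a single non-$2$-torsion rational point on the elliptic curve $E: Y^2 = X(X - a^2B)(X-(a^2B-A))$ attached to $v = \sqrt{A} + a\sqrt{B}$, and then to read such a point off directly from the hypothesis $a^2 - 1 = (B-A)b^2$. First I would record the lower bound: we are in the index-$2$ situation where $\Q(v) = \Q(\sqrt{A},\sqrt{B})$ and $[L_B : \Q(v)] = 2$, so Proposition \ref{main_prop} already gives $\min\deg_{L_B}(v) \ge 2$. Everything then rests on producing a primitive element $\alpha$ with $\deg_\alpha(v) = 2$.

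For the upper bound I would reuse the dictionary set up in the proof of Theorem \ref{index2}. There, a primitive element $\alpha = b_0 + b_3\sqrt{C} + b_5\sqrt{AC} + b_6\sqrt{BC} + b_7\sqrt{ABC}$ with $\deg_\alpha(v) = 2$ exists precisely when the system (\ref{system_eq2}) admits a solution with $\tilde x,\tilde y,\tilde z$ all nonzero, equivalently when the affine curve $C$ of (\ref{disc}), namely $aB\tilde y\tilde z^2 - a\tilde y = A\tilde z^2 - \tilde y^2$, carries a rational point with $\tilde z \neq 0$. Since any such point, transported through the coordinate change (\ref{rational_change}), lands on a non-$2$-torsion point of $E$ (the $2$-torsion all has $\tilde z = 0$), by Lemma \ref{torsion} it either has infinite order, forcing $\mathrm{rank}(E)\ge 1$, or finite order divisible by $3$, forcing $E_{Tor}(\Q)\cong \Z/2\Z\times\Z/6\Z$; in both cases the hypothesis of Theorem \ref{index2} is met. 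So the whole theorem comes down to manufacturing one rational point of $C$ with $\tilde z\neq 0$ out of the relation $a^2-1=(B-A)b^2$.

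The key step is the observation that the hypothesis is exactly the defining equation of $C$ evaluated at a distinguished point. Setting $(\tilde y,\tilde z) = (1/a,\, b/a)$ and substituting into $aB\tilde y\tilde z^2 - a\tilde y = A\tilde z^2 - \tilde y^2$, after multiplying through by $a^2$ the identity collapses to $(B-A)b^2 + 1 - a^2 = 0$, i.e.\ to $a^2-1=(B-A)b^2$, which holds by assumption. Thus $(1/a, b/a)\in C(\Q)$; and because $a,b\neq 0$ we get $\tilde z = b/a \neq 0$, together with $\tilde x = -\tilde y/\tilde z = -1/b \neq 0$ from the second equation of (\ref{system_eq2}). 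Hence none of $b_3,b_5,b_6,b_7$ vanish and the resulting $\alpha$ is genuinely primitive. Feeding $(\tilde x,\tilde y,\tilde z) = (-1/b, 1/a, b/a)$ into the formulas (\ref{intermediate_sol})--(\ref{system_eq}) produces explicit $(a_0,a_1,a_2)\in\Q^3$ and a primitive $\alpha$ realizing $\deg_\alpha(v) = 2$, so $\min\deg_{L_B}(v) = 2$.

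Finally, this construction is uniform in $B$: for every field $L_B$ in the family the same distinguished point $(1/a, b/a)$ works once a compatible $b$ is chosen, so the conclusion holds across $M_a$ simultaneously. I expect the only genuine obstacle to be the discovery of the point $(1/a, b/a)$ itself; once guessed, its verification is the one-line identity above, and the remaining work is the bookkeeping that confirms $\tilde x,\tilde y,\tilde z$ are all nonzero, so that the point is truly non-$2$-torsion rather than one of the degenerate solutions with $\tilde z = 0$ that would collapse $\alpha$ into a non-primitive element.
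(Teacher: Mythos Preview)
Your argument is correct. Both your proof and the paper's reduce the claim, via Theorem \ref{index2}, to exhibiting a non-$2$-torsion rational point on $E: Y^2 = X(X-a^2B)(X-(a^2B-A))$; the difference is where that point is produced. The paper first rewrites $a^2B - A = (B-A)(Bb^2+1)$ and then quotes Proposition~1.4 of \cite[Chap.~X.1]{Silverman} to write down a point on $E$ directly, splitting into cases according to whether $Bb^2+1$ is a square. You instead stay on the auxiliary plane cubic $C$ of (\ref{disc}) and observe that $(\tilde y,\tilde z)=(1/a,\,b/a)$ lies on $C$ precisely because $a^2-1=(B-A)b^2$; since $\tilde z\neq 0$ this point avoids the $2$-torsion locus of $\tilde C$ and feeds back through (\ref{system_eq}) to a primitive $\alpha$. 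Your route is a bit more self-contained (no external citation, no case split) and makes transparent why the Pell-type hypothesis is exactly the right one; the paper's route has the minor advantage of naming the resulting point on $E$ in closed form. One small check you glossed over: the reconstruction step needs $A\tilde z + \tilde x\tilde y \neq 0$ so that $a_2$ is defined, and at your point this quantity equals $(Ab^2-1)/(ab)$, which is indeed nonzero since $A$ is square-free and not equal to $1$.
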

\begin{proof}
The condition that $B \neq a$ guarantees that the elliptic curve $E$ is not singular. By Theorem 1, it suffices to show that any elliptic curve of form $E: Y^2 = X(X-a^2B)(X-(a^2B-A))$ has a non 2-torsion rational point $P$. % The statement of complete $2$-descent is as follows, whose proof is in chapter X.1 of \cite{Silverman}.
Note that the condition $a^2 - 1 = (B-A)b^2$ implies that
\begin{equation*}
    a^2B - A = (a^2-1)B + (B-A) = (B-A)(Bb^2 + 1)
\end{equation*}
Applying Proposition 1.4 of \cite[Chap. X.1]{Silverman} to the elliptic curve $E: y^2 = x(x-a^2B)(x-(a^2B-A))$ gives
$$
P=\begin{cases}(\frac{a^2}{b^2}(Bb^2 + 1), \frac{a^2}{b^3} (Bb^2+1))  & if~Bb^2 + 1~\mbox{is not a square}  \\
((B-A)a^2, -A(B-A)a^2b) & if~Bb^2 + 1 ~\mbox{is a square} 
\end{cases}
$$

\end{proof}
\begin{corollary}
Given a distinct square-free non-zero integers $A, B$ such that $B-A$ is a square, there exist infinitely many numbers $a\in\Q$ such that $\min \deg_{L_B}(\sqrt{A} + a\sqrt{B}) = 2$ for $L_B\in M_a$.
\end{corollary}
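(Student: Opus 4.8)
The plan is to reduce the corollary to an application of Theorem \ref{inf}. That theorem already guarantees $\min \deg_{L_B}(\sqrt{A} + a\sqrt{B}) = 2$ for every $L_B \in M_a$ as soon as the pair $(a,b)$ of non-zero rationals solves $a^2 - 1 = (B-A)b^2$. Hence it suffices to produce infinitely many non-zero rationals $a$ (each avoiding the single excluded value $a = B$ appearing in the definition of $M_a$) for which this equation admits a non-zero rational solution $b$. The whole task thus becomes a statement about rational points on a single conic.

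First I would exploit the hypothesis that $B - A$ is a square: writing $B - A = k^2$ with $k \in \Z_{>0}$, the defining equation becomes $a^2 - 1 = (kb)^2$, that is, the conic $a^2 - (kb)^2 = 1$, which factors as $(a - kb)(a + kb) = 1$. Since this conic carries the obvious rational point $(a, kb) = (1, 0)$, it has infinitely many rational points, which I would parametrize explicitly: for $t \in \Q^\times$, set $a - kb = t$ and $a + kb = t^{-1}$, giving $a = \tfrac{1}{2}(t + t^{-1})$ and $b = \tfrac{1}{2k}(t^{-1} - t)$.

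Next I would dispose of the non-degeneracy constraints. The value $a = \tfrac12(t+t^{-1})$ never vanishes, since $a = 0$ would force $t^2 = -1$, which is impossible over $\Q$; and $b$ vanishes precisely when $t = \pm 1$, so discarding these two parameters keeps $b$ non-zero. Because the map $t \mapsto a$ is finite-to-one (indeed $t$ and $t^{-1}$ yield the same $a$), letting $t$ range over $\Q^\times \setminus \{\pm 1\}$ produces infinitely many distinct values of $a$. After removing the at most two parameters $t$ for which $a = B$, infinitely many admissible $a$ remain, and for each such $a$ the pair $(a,b)$ satisfies the hypothesis of Theorem \ref{inf}, which then yields $\min \deg_{L_B}(\sqrt{A} + a\sqrt{B}) = 2$.

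I do not anticipate a genuine obstacle: the entire content is the observation that the hypothesis ``$B - A$ is a square'' collapses the constraint of Theorem \ref{inf} into a Pell-type conic with a visible rational point, after which infinitude of solutions is automatic. The only steps requiring care are the book-keeping of the degenerate parameters (ensuring $a \neq 0$ and $b \neq 0$) and checking that the membership condition $B \neq a$ of $M_a$ eliminates only finitely many of the constructed values of $a$.
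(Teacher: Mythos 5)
Your proposal is correct and follows essentially the same route as the paper: reduce to Theorem \ref{inf} and exhibit infinitely many rational points on the conic $a^2-1=(B-A)b^2$ using the hypothesis that $B-A$ is a square. The paper writes the parametrization in Pythagorean-triple form, $(a,b)=\bigl(\tfrac{m^2+n^2}{m^2-n^2},\tfrac{2mn}{c(m^2-n^2)}\bigr)$, which is your hyperbola parametrization under the substitution $t=\tfrac{m-n}{m+n}$; your extra book-keeping of the degenerate parameters is a welcome refinement but not a different argument.
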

\begin{proof}
Let $B-A=c^2$. Then $(a,b)=(\frac{m^2+n^2}{m^2-n^2},\frac{2mn }{c\cdot(m^2-n^2)})$ satisfies  (\ref{triples})  for an arbitrary pair of integers $m$ and $n$ with $m > n > 0.$ Then the statement of the corollary follows from Theorem \ref{inf}.   
\end{proof}
\begin{example}
Consider the family of triquadratic number fields $\{L_B=\Q(\sqrt{B}, \sqrt{B-2}, \sqrt{C})\}$ for any fixed square-free integer $B \geq 3$, $B-2$, and $C$. Then the element $(a, b) = (3, 2)$ satisfies the equation $a^2 - 1 = 2b^2$. Hence we have $$\min \deg_{L_B}(\sqrt{B-2} + 3\sqrt{B})=2$$ Indeed, computations on Magma \cite{Magma} suggest that the rank of the associated elliptic curve $Y^2 = X(X-9B)(X-(8B+2))$ is always at least $1$. If $4B+1$ is not a square, then $(\frac{9(4B+1)}{4}, \frac{9(4B+1)}{8})$ is a non-torsion rational point of $E$. If $4B+1$ is a square, then $(18, -18(B-2))$ is a non-torsion rational point of $E$.
\end{example}

We finish the paper with the following conjecture, which states that every triquadratic number fields has an element such that $\min \deg_L(v) \neq [L:\Q(v)]$.
\begin{conjecture}\label{conjecture}
Let $L = \mathbb{Q}(\sqrt{A}, \sqrt{B}, \sqrt{C})$ for any distinct square-free non-zero integers $A, B,$ and $C$. Let $M$ be the subfield $\mathbb{Q}(\sqrt{A}, \sqrt{B})$. Then there exists a rational number $a$ such that
\begin{equation*}
    \min \deg_L (\sqrt{A} + a\sqrt{B}) > [L:M]
\end{equation*}
\end{conjecture}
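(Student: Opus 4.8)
The plan is to translate the conjecture into a statement about a one–parameter family of elliptic curves and then argue that a positive proportion of the parameters yield rank $0$. By Theorem \ref{index2}, and since $[L:M]=2$, the conjecture is equivalent to the following: for each fixed pair of distinct square-free integers $A,B$ there is a nonzero $a\in\Q$ for which $E_a: Y^2 = X(X-a^2B)(X-(a^2B-A))$ has rank $0$ and $E_{a,\mathrm{Tor}}(\Q)\cong\Z/2\Z\times\Z/2\Z$. Translating by $X\mapsto X+a^2B$ puts $E_a$ into the isomorphic form $\hat E_a: Y^2 = X(X+A)(X+a^2B)$, and writing $a=m/n$ in lowest terms gives the integral model $Y^2=X(X+An^2)(X+Bm^2)$. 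The structural feature I would exploit is that, among the three differences of the roots, two always lie in the fixed square classes $-A$ and $-B$, while only the third, $A-a^2B$, varies with $a$.

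First I would dispose of the torsion condition. By Lemma \ref{torsion} the torsion subgroup is either $\Z/2\Z\times\Z/2\Z$ or $\Z/2\Z\times\Z/6\Z$, and as in the Remark the latter forces $A=-(p-q)(p+q)^3$ with $p+q=\pm1$; for fixed square-free $A$ this determines $p,q$ up to finitely many choices and hence pins down $a^2B$, so only finitely many $a$ are excluded. Thus for all but finitely many $a$ the torsion is exactly $\Z/2\Z\times\Z/2\Z$, and it remains to find such an $a$ with $\mathrm{rank}(E_a)=0$.

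For the rank I would run a two–descent. Because $\hat E_a$ has full rational $2$–torsion, the exact sequence of \cite[Chap. X.4]{Silverman} gives $\mathrm{rank}(E_a)+2=\dim_{\mathbb{F}_2}E_a(\Q)/2E_a(\Q)\le\dim_{\mathbb{F}_2}\mathrm{Sel}_2(E_a)$, so it suffices to produce $a$ with $\dim_{\mathbb{F}_2}\mathrm{Sel}_2(E_a)=2$. The local conditions cutting out $\mathrm{Sel}_2(E_a)$ depend only on the square classes of the root differences and on the ramification of $E_a$; since two of these classes are fixed, I would prescribe $m,n$ in suitable congruence classes modulo $8$ and modulo the finitely many bad primes coming from $A$ and $B$, and impose splitting conditions on $A-a^2B$, so that every nontrivial Selmer relation is obstructed and the $2$–Selmer group collapses onto the image of the $2$–torsion. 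A positive density of $a$ should meet these constraints by Dirichlet's theorem and Chebotarev, exactly as in the density computation of the preceding theorem.

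The hard part will be that $\{E_a\}$ is \emph{not} a family of quadratic twists: varying $a$ alters only one of the three root differences, so the distribution theorems of Swinnerton-Dyer \cite{Swinnerton-Dyer} and Kane \cite{Kane} — which compute the $2$–Selmer statistics in twist families and which drive the preceding theorem — do not apply off the shelf. One would therefore have to prove the analogue ``the $2$–Selmer group is minimal for a positive proportion of parameters'' for this specific one–parameter family, and to do so while the parameter is confined to the thin set $a^2B\in B(\Q^\times)^2$. Carrying out this descent and the attendant sieve uniformly in $A$ and $B$ — equivalently, producing even a single provably rank-$0$ fiber by a method that does not depend on the individual curve — is the real obstacle, and is presumably why the statement is only conjectural.
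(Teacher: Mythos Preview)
The statement is a \emph{conjecture} in the paper, not a theorem; the paper offers no proof. The only discussion is the Remark following it, which performs exactly the reduction you carry out in your first paragraph: by Theorem~\ref{index2} it suffices to find, for each fixed square-free pair $(A,B)$, some nonzero $a\in\Q$ for which $E_a:Y^2=X(X-a^2B)(X-(a^2B-A))$ has rank $0$ and torsion $\Z/2\Z\times\Z/2\Z$. The paper then stops at numerical evidence (Magma verification for $\max\{A,B\}<100$).

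Your proposal goes further than the paper by sketching a possible strategy --- disposing of the $\Z/2\Z\times\Z/6\Z$ torsion case by the finiteness argument from the Remark, and then attempting to force $\dim_{\mathbb{F}_2}\mathrm{Sel}_2(E_a)=2$ via congruence conditions on $a$. Your torsion argument is fine and does match the paper's Remark. But your own final paragraph correctly identifies the genuine gap: the family $\{E_a\}_a$ is not a quadratic-twist family, so the Swinnerton-Dyer/Kane machinery that powers the preceding theorem does not apply, and you have not actually carried out the descent or the sieve that would produce a provable rank-$0$ fiber. In other words, your proposal is an honest outline of where the difficulty lies rather than a proof, and that assessment agrees with the paper's decision to leave the statement as a conjecture.
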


\begin{remark}
Theorem \ref{index2} implies that it suffices to show that given any fixed square-free distinct non-zero integers $A$ and $B$, there exists a rational number $a$ such that the rank of the elliptic curve $E: Y^2 = X(X-a^2B)(X-(a^2B-A))$ is equal to $0$ and the torsion subgroup  $E_{Tor}(\Q)$ is isomorphic to $\Z/2\Z \times \Z/2\Z$. Computations on Magma \cite{Magma} suggests that the statement of Conjecture \ref{conjecture} holds for any pair of square-free positive integers $(A,B)$ such that $\text{max}\{A, B\} < 100$. 
\end{remark}

\end{document}